\theoremstyle{theorem} \newtheorem{defin}{Definition}
\theoremstyle{theorem} \newtheorem{theo}[defin]{Theorem}
\theoremstyle{theorem} \newtheorem{prop}[defin]{Proposition}
\theoremstyle{theorem} \newtheorem{coro}[defin]{Corollary}
\theoremstyle{theorem} \newtheorem{rema}[defin]{Remark}
\theoremstyle{theorem} \newtheorem{lemma}[defin]{Lemma}
\begin{document}

\title{A non-periodic and two-dimensional example of elliptic homogenization}

\author{Jens Persson}

\maketitle

\begin{abstract}
	The focus in this paper is on elliptic homogenization of a certain kind of possibly non-periodic problems.
	A non-periodic and two-dimensional example is studied, where we numerically illustrate the homogenized matrix. 
\end{abstract}

\section{Introduction}

\paragraph{Background.} When studying the microscale behavior (beyond the reach of numerical solution methods) of
physical systems, one is naturally lead to the concept of homogenization, i.e., the theory of the convergence of
sequences of partial differential equations.

The homogenization of periodic structures using the two-scale convergence technique is well-established due to the
pioneering work by Gabriel Nguetseng~\cite{Ngu89} and the further development work by Gr\'{e}goire
Allaire~\cite{All92}. Generalizations of the two-scale convergence technique have been developed independently by,
e.g., Maria Lu\'{i}sa Mascarenhas and Anca-Maria Toader~\cite{MasToa01} (scale convergence), Gabriel
Nguetseng~\cite{Ngu03,Ngu04} ($\Sigma$-convergence), and Anders Holmbom, Jeanette Silfver, Nils Svanstedt and Niklas
Wellander~\cite{HolPhD96,HSSW06} (``generalized'' two-scale convergence).

A simple but possibly powerful method of analyzing non-periodic structures is the $\lambda$-scale convergence technique
introduced by Anders Holmbom and Jeanette Silfver~\cite{HolSil06}. $\lambda$-scale convergence is scale convergence in
the special case of using the Lebesgue (i.e., $\lambda$) measure and test functions periodic in the second argument
\cite{HolSil06}. Homogenization techniques based on this approach are developed in the doctoral thesis~\cite{SilPhD07}
of Jeanette Silfver. These results are the point of departure for the main contributions in this paper.

\paragraph{Organization of the paper.} In Section~\ref{sec:convfunseq} we look at the convergence for sequences of
functions. We start by stating the definition of the traditional notion of two-scale convergence as introduced by
Gabriel Nguetseng~\cite{Ngu89}. Then we move on to generalizations of this convergence mode, namely ``generalized''
two-scale convergence~\cite{HolPhD96,HSSW06} and scale convergence~\cite{MasToa01}. We conclude the section by
introducing $\lambda$-scale convergence and the important notion of asymptotically uniformly distributed
sequences~\cite{HolSil06,SilPhD07}.

Section~\ref{seq:convseqpartder} deals with the convergence for sequences of partial derivatives. We first look at how
the two-scale convergence works for partial derivatives in the periodic case, and then we consider the more general
case of $\lambda$-scale convergence of sequences of partial derivatives.

The concept of homogenization is introduced in Section~\ref{sec:ellhomo}, which begins with stating the definition of
H-convergence~\cite{Mur78}, i.e., the generalization of Sergio Spagnolo's G-convergence of sequences of symmetric
matrices~\cite{Spa67,Spa68}. We give a theorem on the homogenization of a sequence of periodic matrices. We then
introduce the important ``type-$\mathrm{H}^\zeta_X$'' property, which is employed in the end of the section when
formulating a theorem on the homogenization of $\lambda$-structures~\cite{SilPhD07}.  

In Section~\ref{sec:nonpertwodimex} we specifically study a non-periodic and two-dimensional example of a
$\lambda$-structure with the property of not only having a periodic direction, but also having oscillations with a
monotonically increasing frequency in one, non-periodic direction. We formulate and prove a series of proposition which
are needed in order to prove the main result of this paper, Theorem~\ref{theo:lambdahomoex}, in which we present a
homogenization result for the considered $\lambda$-structure in the form of a homogenized matrix and a governing local
problem.
			
The concluding Section~\ref{sec:numill} illustrates the results achieved in Section~\ref{sec:nonpertwodimex}. We
numerically solve the local problem to obtain the non-constant homogenized matrix, and we heuristically explain why it
is isotropic on a line along the periodic direction.

\paragraph{Notations.} The following more or less handy notations are employed:

The $N$-tuple $(\xi_1,\ldots,\xi_N)$ in $\mathbb{R}^N$ is denoted $(\xi_i)_{i=1,\ldots,N}$, or $\xi$ whenever
convenient. Similarly, $(m_{ij})_{i,j=1,\ldots,N}$, or simply the mere majuscule $M$ when handy, denotes an
$N \times N$ matrix. A bold dot, i.e. $\, \boldsymbol{\cdot} \,$, represents a non-fixed variable, e.g.,
$\phi(\, \boldsymbol{\cdot} \, ,y)$ is the same as the function $x \mapsto \phi(x,y)$ where $y$ is held fixed as a
parameter. We will also allow expressions like, e.g., $k \, \boldsymbol{\cdot} \,$ meaning $x \mapsto kx$. In this
paper, $\Omega$ is always an open bounded non-empty subset of $\mathbb{R}^N$ and, if nothing else is stated, $Y$ is the unit cube
$(0,1)^N$ in $\mathbb{R}^N$. The $N$-tuple $\bigl( \tfrac{\partial}{\partial x_i} \bigr)_{i=1,\ldots,N}$ of partial
derivative operators is denoted by the symbol $\nabla$. Following, e.g., \cite{CioDon99}, the function space
$W_{\mathrm{per}} (Y)$ denotes the subspace of functions in $H^1_{\mathrm{per}} (Y)$ with vanishing mean value.

\section{Convergence for sequences of functions}
\label{sec:convfunseq}

The two-scale convergence method was introduced in 1989 by Gabriel Nguetseng~\cite{Ngu89}, and a modern formulation is
given by Definition~\ref{def:twoscaleconv}~\cite{LNgW02}.
\begin{defin}
	\label{def:twoscaleconv}
	A sequence $\{ u^h \}$ in $L^2(\Omega)$ is said to two-scale converge to the limit $u_0 \in L^2 (\Omega \times Y)$
	if, for any $v \in L^2 \bigl( \Omega; \, C_{\mathrm{per}} (Y) \bigr)$,
	\begin{equation*}
		\lim_{h \rightarrow \infty} \int\limits_{\Omega} u^h (x) v (x,hx) \, \mathrm{d} x
		= \int\limits_{\Omega} \int\limits_Y u_0 (x,y) v (x,y) \, \mathrm{d} y \, \mathrm{d} x.  
	\end{equation*}
\end{defin}
An important property of the two-scale convergence is given by Proposition~\ref{prop:twoscaleweak}~\cite{LNgW02}.
\begin{prop}
	\label{prop:twoscaleweak}
	If $\{ u^h \}$ two-scale converges to $u_0$, and $v \in L^2 \bigl( \Omega; \, C_{\mathrm{per}} (Y) \bigr)$, then
	\begin{align*}
		u^h								&\rightharpoonup \int\limits_Y u_0(\, \boldsymbol{\cdot} \, ,y) \, \mathrm{d} y
											\quad \textrm{in } L^2 (\Omega)
		\intertext{and}
		v(\, \boldsymbol{\cdot} \,
		,h \, \boldsymbol{\cdot} \,)	&\rightharpoonup \int\limits_Y v(\, \boldsymbol{\cdot} \, ,y) \, \mathrm{d} y
											\quad \textrm{in } L^2 (\Omega).
	\end{align*}
\end{prop}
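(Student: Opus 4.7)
The proposition bundles two claims, which I would prove independently. The first is essentially a restatement of the definition of two-scale convergence with a $y$-independent test function. The second is a classical periodic-oscillation (Riemann--Lebesgue type) result and in fact does not involve $\{u^h\}$ or $u_0$ at all.

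For the first convergence, I would fix an arbitrary $\phi\in L^2(\Omega)$ and view it as the element $\tilde\phi(x,y):=\phi(x)$ of $L^2\bigl(\Omega;\,C_{\mathrm{per}}(Y)\bigr)$; this is legitimate because $y\mapsto\phi(x)$ is trivially continuous and $Y$-periodic, and $\|\tilde\phi(x,\cdot)\|_{C(Y)}=|\phi(x)|$ lies in $L^2(\Omega)$. Taking $v=\tilde\phi$ in Definition~\ref{def:twoscaleconv} directly yields
\[
\lim_{h\to\infty}\int_\Omega u^h(x)\phi(x)\,\mathrm{d}x
=\int_\Omega\phi(x)\!\left(\int_Y u_0(x,y)\,\mathrm{d}y\right)\!\mathrm{d}x,
\]
which is precisely the claimed weak convergence in $L^2(\Omega)$; no real obstacle arises here.

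For the second convergence, my first step is to verify that $\{v(\boldsymbol{\cdot},h\boldsymbol{\cdot})\}$ is bounded in $L^2(\Omega)$ via the pointwise estimate $|v(x,hx)|\leq\|v(x,\cdot)\|_{C(Y)}$ together with the hypothesis $v\in L^2\bigl(\Omega;\,C_{\mathrm{per}}(Y)\bigr)$. With this uniform bound in hand, both sides of the asserted convergence depend continuously on $v$ in the $L^2\bigl(\Omega;\,C_{\mathrm{per}}(Y)\bigr)$-norm, so a density argument reduces the problem to $v$ taken from a convenient dense subspace, e.g.\ the algebraic tensor product $C(\overline{\Omega})\otimes C_{\mathrm{per}}(Y)$; that is, it suffices to treat $v(x,y)=\alpha(x)\beta(y)$. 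Absorbing $\alpha$ into the $L^2$-test function, the problem further reduces to the purely periodic statement $\beta(h\boldsymbol{\cdot})\rightharpoonup\int_Y\beta(y)\,\mathrm{d}y$ weakly in $L^2(\Omega)$.

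The main obstacle is thus this classical periodic-oscillation fact. I would prove it by partitioning $\Omega$ (modulo a boundary layer of vanishing measure) into cubes of side $1/h$, changing variables $y=hx$ on each cube and using $Y$-periodicity to evaluate the integral of $\beta(h\boldsymbol{\cdot})$ over each cube as $h^{-N}\int_Y\beta(y)\,\mathrm{d}y$, and identifying the resulting expression tested against a continuous $\psi$ as a Riemann sum converging to $\bigl(\int_Y\beta\bigr)\int_\Omega\psi$. Extension to arbitrary $\psi\in L^2(\Omega)$ follows from density of $C(\overline{\Omega})$ in $L^2(\Omega)$ combined with the uniform bound $\|\beta(h\boldsymbol{\cdot})\|_{L^2(\Omega)}\leq|\Omega|^{1/2}\|\beta\|_{C(Y)}$, closing the proof.
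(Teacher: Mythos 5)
The paper does not prove this proposition; it is quoted as a known result with a citation to Lukkassen--Nguetseng--Wall, so there is no in-paper argument to compare against. Your proof is correct and is the standard one. The first convergence is indeed immediate: $y$-independent test functions $\tilde\phi(x,y)=\phi(x)$ with $\phi\in L^2(\Omega)$ lie in $L^2\bigl(\Omega;\,C_{\mathrm{per}}(Y)\bigr)$, and Definition~\ref{def:twoscaleconv} applied to them is literally the statement that all $L^2$-pairings converge to the pairings with $\int_Y u_0(\,\boldsymbol{\cdot}\,,y)\,\mathrm{d}y$, which is weak convergence (boundedness of $\{u^h\}$ is not needed for this, and follows anyway by Banach--Steinhaus). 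The reduction of the second convergence via the uniform bound $\lVert v(\,\boldsymbol{\cdot}\,,h\,\boldsymbol{\cdot}\,)\rVert_{L^2(\Omega)}\leqslant\lVert v\rVert_{L^2(\Omega;\,C_{\mathrm{per}}(Y))}$ and density of $C(\overline\Omega)\otimes C_{\mathrm{per}}(Y)$ to the oscillation lemma $\beta(h\,\boldsymbol{\cdot}\,)\rightharpoonup\int_Y\beta$ is sound. One minor technical remark: your "boundary layer of vanishing measure" when tiling $\Omega$ with cubes of side $\tfrac{1}{h}$ uses that $\lambda(\partial\Omega)=0$, which a general open bounded set need not satisfy; the clean fix is to extend $\psi 1_\Omega$ by zero to a large cube $Q\supset\overline\Omega$ and tile $Q$ instead, which costs nothing. (In this paper the point is moot, since $\Omega$ is later assumed Lipschitz.)
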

Analogous to a corresponding compactness result for weak convergence, we have
Theorem~\ref{th:twoscalecomp}~\cite{LNgW02}.
\begin{theo}
	\label{th:twoscalecomp}
	Every bounded sequence in $L^2(\Omega)$ has a subsequence which two-scale converges.
\end{theo}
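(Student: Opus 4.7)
The plan is to follow Nguetseng's original functional-analytic argument: produce the candidate limit $u_{0}$ by duality, and identify it as an element of $L^{2}(\Omega\times Y)$ via a mean-value estimate. To this end, for each $h$ I define the linear functional
\begin{equation*}
	L_{h}(v) = \int\limits_{\Omega} u^{h}(x) \, v(x,hx) \, \mathrm{d}x, \qquad v \in L^{2}\bigl(\Omega; \, C_{\mathrm{per}}(Y)\bigr).
\end{equation*}
Because $|v(x,hx)| \leq \|v(x,\, \boldsymbol{\cdot} \,)\|_{C_{\mathrm{per}}(Y)}$ pointwise, Cauchy--Schwarz gives the crude bound $|L_{h}(v)| \leq \|u^{h}\|_{L^{2}(\Omega)} \|v\|_{L^{2}(\Omega;\, C_{\mathrm{per}}(Y))} \leq C \|v\|_{L^{2}(\Omega;\, C_{\mathrm{per}}(Y))}$, uniformly in $h$. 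Since $L^{2}\bigl(\Omega; \, C_{\mathrm{per}}(Y)\bigr)$ is separable, Banach--Alaoglu provides a subsequence (not relabelled) along which $L_{h} \rightharpoonup^{*} L$ for some bounded linear functional $L$ on $L^{2}\bigl(\Omega; \, C_{\mathrm{per}}(Y)\bigr)$.

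The heart of the proof is to show that $L$ actually extends continuously to $L^{2}(\Omega\times Y)$. For this I would establish the mean-value lemma: for every $v \in L^{2}\bigl(\Omega; \, C_{\mathrm{per}}(Y)\bigr)$,
\begin{equation*}
	\lim_{h \rightarrow \infty} \int\limits_{\Omega} \bigl| v(x,hx) \bigr|^{2} \, \mathrm{d}x = \int\limits_{\Omega} \int\limits_{Y} \bigl| v(x,y) \bigr|^{2} \, \mathrm{d}y \, \mathrm{d}x.
\end{equation*}
This identity I would first verify for tensor products $v(x,y) = \varphi(x) \psi(y)$ with $\varphi \in C_{c}(\Omega)$ and $\psi \in C_{\mathrm{per}}(Y)$ via the classical fact that $\psi(h\, \boldsymbol{\cdot} \,) \rightharpoonup \int_{Y} \psi$ weakly in $L^{2}_{\mathrm{loc}}$, then extend to finite linear combinations, and finally pass to general $v$ by density together with the uniform bound $\int_{\Omega} |v(x,hx)|^{2} \, \mathrm{d}x \leq \|v\|_{L^{2}(\Omega;\, C_{\mathrm{per}}(Y))}^{2}$.

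Given the mean-value lemma, Cauchy--Schwarz yields
\begin{equation*}
	|L_{h}(v)|^{2} \leq \|u^{h}\|_{L^{2}(\Omega)}^{2} \int\limits_{\Omega} |v(x,hx)|^{2} \, \mathrm{d}x,
\end{equation*}
and passing to the limit gives $|L(v)| \leq C \|v\|_{L^{2}(\Omega \times Y)}$ for $v$ in the test space. Since $L^{2}\bigl(\Omega; \, C_{\mathrm{per}}(Y)\bigr)$ is dense in $L^{2}(\Omega \times Y)$, $L$ extends uniquely to a bounded linear functional on the Hilbert space $L^{2}(\Omega \times Y)$, and the Riesz representation theorem produces a unique $u_{0} \in L^{2}(\Omega \times Y)$ with $L(v) = \int_{\Omega} \int_{Y} u_{0} v \, \mathrm{d}y \, \mathrm{d}x$. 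By construction of $L$, the extracted subsequence of $\{u^{h}\}$ two-scale converges to $u_{0}$ in the sense of Definition~\ref{def:twoscaleconv}.

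The main obstacle is the mean-value lemma: the integrand $|v(x,hx)|^{2}$ depends on $h$ simultaneously through the slow and the fast variable, so one cannot invoke weak convergence directly. The reduction to tensor products combined with the pointwise majorant furnished by the $C_{\mathrm{per}}(Y)$-norm is precisely what makes the density argument go through, and this is the step that justifies why the test space is taken to be $L^{2}\bigl(\Omega; \, C_{\mathrm{per}}(Y)\bigr)$ rather than all of $L^{2}(\Omega \times Y)$.
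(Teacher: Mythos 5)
The paper does not prove this theorem at all; it is quoted as a known compactness result with a citation to Lukkassen--Nguetseng--Wall, so there is no internal proof to compare against. Your argument is correct and is precisely the standard proof from that reference and from Nguetseng's and Allaire's original papers: the uniform dual bound, sequential weak-$*$ compactness over the separable space $L^2\bigl(\Omega;\,C_{\mathrm{per}}(Y)\bigr)$, the mean-value (admissibility) lemma to upgrade the continuity of the limit functional to the $L^2(\Omega\times Y)$ norm, and Riesz representation --- with the only minor point worth making explicit being that the extension of the mean-value identity from tensor products to linear combinations goes through the bilinear form $(v,w)\mapsto\int_{\Omega}v(x,hx)\,w(x,hx)\,\mathrm{d}x$ rather than linearity of $v\mapsto|v|^2$ itself.
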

Introducing sequences of operators $\tau^h$ defined below we may extend Definition~\ref{def:twoscaleconv} of two-scale
convergence to a generalized version according to Definition~\ref{def:gentwoscale}~\cite{HolPhD96,HSSW06}.
\begin{defin}
	\label{def:gentwoscale}
	Assume that $Y$ is an open bounded subset of $\mathbb{R}^M$. Let $X \subset L^2 (\Omega \times Y)$ be a linear
	space and
	\begin{equation*}
		\tau^h \, : \, X \rightarrow L^2 (\Omega)
	\end{equation*}
	linear operators. A sequence $\{ u^h \}$ in $L^2 (\Omega)$ is said to two-scale converge to
	$u_0 \in L^2 (\Omega \times Y)$ with respect to $\{ \tau^h \}$ if, for any $v \in X$,
	\begin{equation*}
		\lim_{h \rightarrow \infty} \int\limits_{\Omega} u^h (x) ( \tau^h v ) (x) \, \mathrm{d} x
		= \int\limits_{\Omega} \int\limits_Y u_0 (x,y) v (x,y) \, \mathrm{d} y \, \mathrm{d} x.
	\end{equation*}
\end{defin}
In order to achieve a compactness result like Theorem~\ref{th:twoscalecomp}, we need Definition~\ref{def:compatible}.
\begin{defin}
	\label{def:compatible}
	Assume that $Y$ is an open bounded subset of $\mathbb{R}^M$. Let $X \subset L^2 (\Omega \times Y)$ be a normed
	space and
	\begin{equation*}
		\tau^h \, : \, X \rightarrow L^2 (\Omega)
	\end{equation*}
	linear operators. Then $\{ \tau^h \}$ is two-scale compatible with respect to $X$ if there exists $C > 0$
	independent of $h$ such that, for any $v \in X$,
	\begin{align*}
		\lim_{h \rightarrow \infty} \lVert \tau^h v \rVert_{L^2(\Omega)}
				&\leqslant C \lVert v \rVert_{L^2(\Omega \times Y)}	
		\intertext{and}
		\lVert \tau^h v \rVert_{L^2(\Omega)}
				&\leqslant C \lVert v \rVert_X.
	\end{align*}
	Furthermore, $X$ is called admissible with respect to $\{ \tau^h \}$.
\end{defin}
Using Definition~\ref{def:compatible}, we have the compactness result according to
Theorem~\ref{th:gentwoscalecomp}~\cite{HolPhD96,HSSW06}.
\begin{theo}
	\label{th:gentwoscalecomp}
	Assume that $Y$ is an open bounded subset of $\mathbb{R}^M$. Let $\{ \tau^h \}$ be two-scale compatible with
	respect to $X$, a separable Banach space dense in $L^2(\Omega \times Y)$. Then every bounded sequence $\{ u^h \}$
	in $L^2(\Omega)$ has a subsequence that two-scale converges with respect to $\{ \tau^h \}$.
\end{theo}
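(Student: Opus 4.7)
The plan is to adapt the classical proof of Theorem~\ref{th:twoscalecomp}, with the admissible space $X$ playing the role of the periodic test-function space and the two inequalities in Definition~\ref{def:compatible} carrying out the two essential estimates usually supplied by continuity of $v \mapsto v(\, \boldsymbol{\cdot} \,, h \, \boldsymbol{\cdot} \,)$. The idea is to build a continuous linear functional on $L^2 (\Omega \times Y)$ by first extracting a subsequence that converges tested against a countable dense family in $X$, then extending to all of $X$ and further to $L^2 (\Omega \times Y)$, and finally invoking the Riesz representation theorem to produce the two-scale limit $u_0$.

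First, fix $v \in X$. Cauchy--Schwarz together with the second compatibility estimate and the hypothesis $\lVert u^h \rVert_{L^2 (\Omega)} \leqslant K$ yields
\begin{equation*}
	\biggl| \int\limits_\Omega u^h (x) (\tau^h v)(x) \, \mathrm{d} x \biggr|
		\leqslant K \lVert \tau^h v \rVert_{L^2 (\Omega)}
		\leqslant KC \lVert v \rVert_X,
\end{equation*}
so the scalar sequence is uniformly bounded in $h$ for each fixed $v$. Using separability of $X$, I pick a countable dense subset $\{ v_k \}$ and apply a diagonal extraction to obtain a subsequence, not relabelled, along which $L (v_k) := \lim_{h \rightarrow \infty} \int_\Omega u^h (\tau^h v_k) \, \mathrm{d} x$ exists for every $k$. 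The same estimate, applied to $v - v_k$ and using linearity of $\tau^h$, shows that $\bigl\{ \int_\Omega u^h (\tau^h v) \, \mathrm{d} x \bigr\}$ is Cauchy for every $v \in X$, so $L$ is a well-defined linear functional on $X$ with $\lvert L(v) \rvert \leqslant KC \lVert v \rVert_X$.

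The crucial step is to extend $L$ from $X$ to all of $L^2 (\Omega \times Y)$, and this is precisely where the first compatibility inequality enters: passing to the limit in $h$ gives $\lvert L(v) \rvert \leqslant K \lim_{h \rightarrow \infty} \lVert \tau^h v \rVert_{L^2 (\Omega)} \leqslant KC \lVert v \rVert_{L^2 (\Omega \times Y)}$ for every $v \in X$. By density of $X$ in $L^2 (\Omega \times Y)$, the functional $L$ extends uniquely to a bounded linear functional there, and the Riesz representation theorem produces $u_0 \in L^2 (\Omega \times Y)$ such that $L(v) = \int_\Omega \int_Y u_0 (x,y) v (x,y) \, \mathrm{d} y \, \mathrm{d} x$ for all $v \in L^2 (\Omega \times Y)$. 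Restricting this identity to $v \in X$ recovers exactly Definition~\ref{def:gentwoscale}.

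The main obstacle is conceptual rather than technical: one must recognize the complementary roles of the two compatibility estimates. The $X$-norm bound is what makes the diagonal extraction and the Cauchy argument on $X$ work, while the asymptotic $L^2 (\Omega \times Y)$-norm bound on $\tau^h v$ is what permits the extension of $L$ to the full space $L^2 (\Omega \times Y)$ required to invoke Riesz. Either inequality alone is insufficient, which explains why both appear in Definition~\ref{def:compatible}.
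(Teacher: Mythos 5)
The paper itself gives no proof of Theorem~\ref{th:gentwoscalecomp}, deferring to the cited references, but your argument is correct and is essentially the standard proof found there: diagonal extraction over a countable dense subset of $X$ using the $\lVert v \rVert_X$ bound, a Cauchy/density argument to define $L$ on all of $X$, the asymptotic $L^2(\Omega \times Y)$ bound to extend $L$ by density, and Riesz representation to produce $u_0$. No gaps.
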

Using weak convergences, we can introduce a stronger version of two-scale compatibility than presented in
Definition~\ref{def:compatible}, see Definition~\ref{def:strongcompatible}.
\begin{defin}
	\label{def:strongcompatible}
	Assume that $Y$ is an open bounded subset of $\mathbb{R}^M$. Let $\{ \tau^h \}$ be two-scale compatible with
	respect to the linear space $X \subset L^2(\Omega \times Y)$, and let $\{ u^h \}$ be a bounded sequence in
	$L^2 (\Omega)$ two-scale converging to $u_0$. Then $\{ \tau^h \}$ is strongly two-scale compatible if, for any
	$v \in X$,
	\begin{align*}
		u^h			&\rightharpoonup \int\limits_Y u_0(\, \boldsymbol{\cdot} \, ,y) \, \mathrm{d} y
						\quad \textrm{in } L^2 (\Omega)
		\intertext{and}
		\tau^h v	&\rightharpoonup \int\limits_Y v(\, \boldsymbol{\cdot} \, ,y) \, \mathrm{d} y
						\quad \textrm{in } L^2 (\Omega).
	\end{align*}
\end{defin}
Compare Definition~\ref{def:strongcompatible} with the result of Proposition~\ref{prop:twoscaleweak}.

An alternative generalization of two-scale convergence is scale convergence introduced by Maria Lu\'{i}sa Mascarenhas
and Anca-Maria Toader in 2001~\cite{MasToa01}. The definition of scale convergence is according to
Definition~\ref{def:scaleconv}.
\begin{defin}
	\label{def:scaleconv}
	Assume that $Y$ is a metrizable compact space, $\mu$ a Young measure on $\Omega \times Y$, and
	$L^2_{\mu} (\Omega \times Y)$ is the space of all functions with $\mu$-integrable square. Furthermore, let
	$\{ \alpha^h \}$ be a sequence of $\mu$-measurable functions $\alpha^h \, : \, \Omega \rightarrow Y$. A sequence
	$\{ u^h \}$ in $L^2 (\Omega)$ is said to scale converge to $u_0 \in L^2_{\mu} (\Omega \times Y)$ with respect to
	$\{ \alpha^h \}$ if, for any $v \in L^2 \bigl( \Omega; \, C(Y) \bigr)$,
	\begin{equation*}
		\lim_{h \rightarrow \infty} \int\limits_{\Omega} u^h (x) v \bigl( x, \alpha^h (x) \bigr) \, \mathrm{d} x
		= \int\limits_{\Omega \times Y} u_0 (x,y) v (x,y) \, \mathrm{d} \mu (x,y).
	\end{equation*}
\end{defin}
A special case of scale convergence is achieved by choosing the same measure and same class of admissible test
functions as in two-scale convergence, namely the Lebesgue measure $\lambda$ and
$L^2 \bigl( \Omega; \, C_{\mathrm{per}} (Y) \bigr)$, respectively. This leads to
Definition~\ref{def:lambdascale}~\cite[Definition~27]{SilPhD07} (see also~\cite[Definition~11]{HolSil06}).  
\begin{defin}
	\label{def:lambdascale}
	Assume that $Y$ is the unit cube in $\mathbb{R}^M$. Furthermore, let $\{ \alpha^h \}$ be a sequence of functions
	$\alpha^h \, : \, \Omega \rightarrow Y$. A sequence $\{ u^h \}$ in $L^2(\Omega)$ is said to $\lambda$-scale
	converge to $u_0 \in L^2(\Omega \times Y )$ with respect to $\{ \alpha^h \}$ if, for any
	$v \in L^2 \bigl( \Omega; \, C_{\mathrm{per}} (Y) \bigr)$,
	\begin{equation*}
		\lim_{h \rightarrow \infty} \int\limits_{\Omega} u^h (x) v\bigl( x,\alpha^h(x) \bigr) \, \mathrm{d} x
		= \int\limits_{\Omega} \int\limits_Y u_0 (x,y) v (x,y) \, \mathrm{d} y \, \mathrm{d} x.  
	\end{equation*}
\end{defin}
A thorough treatment of how $\{ \alpha^h \}$ could be chosen to obtain strong two-scale compatibility is found in the
doctoral thesis~\cite[Subsection~2.4.2]{SilPhD07} of Jeanette Silfver (see also \cite{HolSil06}). We reproduce the
results below.

Following~\cite[Subsection~2.4.2]{SilPhD07}, we omit cases where $M \neq N$ letting
\begin{equation*}
	\alpha^h \, : \, \mathbb{R}^N \rightarrow \mathbb{R}^N
\end{equation*}
be a continuous bijection. Furthermore, $\{ \bar{Y}^j \}_{j=1}^{\infty}$ is a covering of $\mathbb{R}^N$ with unit
cubes, and $\{ \bar{Y}^j_k \}_{k=1}^{n^N}$ a covering of $\bar{Y}^j$ with cubes with side lengths $\tfrac{1}{n}$ in
such a way that $Y^j_k$ are $Y$-periodic repetitions of a cube $Y_k \subset Y$. Assuming that
$\Omega \subset \mathbb{R}^N$ has a Lipschitz boundary, we define
\begin{equation*}
	\Omega^h_j = (\alpha^h)^{-1} (\bar{Y}^j) \cap \Omega .
\end{equation*}
We also assume, for each $h$, that there exists a finite set $q(h) \subset \mathbb{Z}_+$ such that
$\Omega = \cup_{j \in q(h)}\Omega^h_j$. Similarly, introduce
\begin{equation*}
	\Omega^h_{j,k} = (\alpha^h)^{-1} (\bar{Y}^j_k) \cap \Omega .
\end{equation*}
Finally, it is assumed that $\Omega^h_j \subset N_{r(h)}(x_{h,j})$ for some ball $N_{r(h)}(x_{h,j})$, centered at some
$x_{h,j} \in \Omega^h_j$, with radius $r(h) \rightarrow 0$ as $h \rightarrow \infty$. Given the setting above,
Definition~\ref{def:AUD} makes sense~\cite[Definition~28]{SilPhD07} (see also \cite[Definition~12]{HolSil06}).
\begin{defin}
	\label{def:AUD}
	Suppose that for all cubes $Y_k \subset Y$ and any $\Omega^h_j$, $j \in q(h)$, such that
	$(\alpha^h)^{-1} (Y^j) \cap \Omega$ does not intersect $\partial \Omega$,
	\begin{equation}
		\label{eq:AUDequation}
		\left| \frac{\lambda(\Omega^h_{j,k})}{\lambda(\Omega^h_j)} - \lambda(Y_k) \right| < \epsilon
	\end{equation}
	where $\epsilon = \epsilon (h) \rightarrow 0$ for $h \rightarrow \infty$. Then $\{ \alpha^h \}$ is said to be
	asymptotically uniformly distributed on $\Omega$.
\end{defin}
This is sufficient to obtain the strong two-scale compatibility as promised, see
Proposition~\ref{prop:lambdastronglytwoscale}~\cite[Proposition~30]{SilPhD07} (see
also~\cite[Proposition~15]{HolSil06}).
\begin{prop}
	\label{prop:lambdastronglytwoscale}
	Assume that $\Omega$ has a Lipschitz continuous boundary. Let $\{ \alpha^h \}$ be asymptotically uniformly
	distributed on $\Omega$. Then $\{ \tau^h \}$ defined by
	\begin{equation*}
		\tau^h v = v \bigl( \, \boldsymbol{\cdot} \, , \alpha^h (\, \boldsymbol{\cdot} \,) \bigr)
	\end{equation*}
	is strongly two-scale compatible with respect to $L^2 \bigl( \Omega; \, C_{\mathrm{per}} (Y) \bigr)$.
\end{prop}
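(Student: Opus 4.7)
Strong two-scale compatibility amounts to the two norm bounds of Definition~\ref{def:compatible} together with the two weak convergences of Definition~\ref{def:strongcompatible}. The pointwise estimate $|v(x,\alpha^h(x))| \leqslant \|v(x,\,\boldsymbol{\cdot}\,)\|_{C_{\mathrm{per}}(Y)}$ immediately gives $\|\tau^h v\|_{L^2(\Omega)} \leqslant \|v\|_{L^2(\Omega;\,C_{\mathrm{per}}(Y))}$, taking care of one compatibility bound with constant $C=1$. The first weak convergence in Definition~\ref{def:strongcompatible} is free as well: feeding $v(x,y)=\phi(x)$ with $\phi \in L^2(\Omega)$ into Definition~\ref{def:gentwoscale} gives $\tau^h v=\phi$ and forces $\int_\Omega u^h \phi\,\mathrm{d}x \to \int_\Omega \phi \int_Y u_0(\,\boldsymbol{\cdot}\,,y)\,\mathrm{d}y\,\mathrm{d}x$ for every $\phi\in L^2(\Omega)$. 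Everything therefore reduces to proving
\begin{equation*}
\tau^h v \rightharpoonup \int_Y v(\,\boldsymbol{\cdot}\,,y)\,\mathrm{d}y \quad \textrm{in } L^2(\Omega),
\end{equation*}
for every $v \in L^2\bigl(\Omega;\,C_{\mathrm{per}}(Y)\bigr)$, since the very same argument applied to $|v|^2 \in L^1\bigl(\Omega;\,C_{\mathrm{per}}(Y)\bigr)$ tested against the constant function $1$ will yield $\lim_h \|\tau^h v\|_{L^2(\Omega)}^2 = \|v\|_{L^2(\Omega\times Y)}^2$ and hence the remaining compatibility inequality.

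\textbf{The AUD computation.} Fix a test function $\varphi \in L^2(\Omega)$. Using the uniform $L^2$-bound just established, a density argument reduces the problem to $v(x,y)=\phi(x)\psi(y)$ with $\phi \in C(\bar\Omega)$, $\psi \in C_{\mathrm{per}}(Y)$, and $\varphi$ continuous on $\bar\Omega$. Uniformly approximate $\psi$ by a step function $\psi_n = \sum_{k=1}^{n^N} c_k \mathbf{1}_{Y_k}$, where $\{Y_k\}$ is the subdivision of $Y$ into cubes of side $1/n$ introduced in the paragraph preceding Definition~\ref{def:AUD}. Writing $\Omega$ as the union of the cells $\Omega^h_j$ and splitting $j \in q(h)$ into ``interior'' indices (those for which $(\alpha^h)^{-1}(Y^j)\cap \Omega$ misses $\partial \Omega$) and ``boundary'' indices, the interior contribution becomes $\sum_j \sum_k c_k \int_{\Omega^h_{j,k}} \varphi \phi\,\mathrm{d}x$. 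Uniform continuity of $\varphi\phi$ together with $\mathrm{diam}(\Omega^h_j) \leqslant 2r(h) \to 0$ lets me replace $\varphi(x)\phi(x)$ by $\varphi(x_{h,j})\phi(x_{h,j})$ up to a uniformly vanishing error; then \eqref{eq:AUDequation} replaces $\lambda(\Omega^h_{j,k})$ by $\lambda(Y_k)\lambda(\Omega^h_j)$; the $k$-sum collapses to $\int_Y \psi_n\,\mathrm{d}y$, and the $j$-sum collapses to a Riemann sum for $\int_\Omega \varphi\phi\,\mathrm{d}x$. Sending first $h\to\infty$ and then $n\to\infty$ (using $\psi_n \to \psi$ uniformly) delivers the required limit for the chosen $v$ and $\varphi$.

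\textbf{Main obstacle.} The principal technical work lies in controlling the boundary cells, on which \eqref{eq:AUDequation} is not assumed to hold. Each such $\Omega^h_j$ sits in a ball of radius $r(h)$ meeting $\partial\Omega$, so their union is contained in a tubular $r(h)$-neighborhood of $\partial\Omega$. The Lipschitz hypothesis on $\partial\Omega$ forces this neighborhood to have Lebesgue measure $O(r(h)) \to 0$, and a Cauchy--Schwarz estimate against $\|\psi\|_\infty$ and $\|\varphi\phi\|_{L^2}$ kills the boundary contribution uniformly in $n$. A final density step extends the conclusion from continuous $\varphi$ back to arbitrary $\varphi \in L^2(\Omega)$, using once again the uniform $L^2$-bound on $\{\tau^h v\}$. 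Combining this weak convergence with the pointwise estimate of the first paragraph, applied also to $|v|^2$ as noted, produces both items of Definition~\ref{def:compatible} and both items of Definition~\ref{def:strongcompatible}.
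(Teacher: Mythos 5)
The paper does not prove this proposition at all — it is quoted verbatim from the cited references (Silfver's thesis, Proposition~30, and Holmbom--Silfver, Proposition~15) — and your argument reproduces exactly the standard proof given there: the pointwise sup-norm bound for one compatibility inequality, reduction by density to tensor products and step functions in $y$, the asymptotic-uniform-distribution estimate \eqref{eq:AUDequation} on interior cells, the Lipschitz boundary to kill the $O(r(h))$-measure boundary layer, and the $|v|^2$ trick for the remaining inequality. Your proposal is correct and takes essentially the same approach as the source the paper relies on.
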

Note here that the admissible space in $L^2 (\Omega \times Y)$ is in this case
$L^2 \bigl( \Omega; \, C_{\mathrm{per}} (Y) \bigr)$. As a consequence of Proposition~\ref{prop:lambdastronglytwoscale}, we
have Corollary~\ref{cor:sublambdascale}~\cite[Corollary~31]{SilPhD07} (see also~\cite[Corollary~16]{HolSil06}).
\begin{coro}
	\label{cor:sublambdascale}
	Assume that $\Omega$ has a Lipschitz continuous boundary. Let $\{ \alpha^h \}$ be asymptotically uniformly
	distributed on $\Omega$, and let $\{ u^h \}$ strongly converge to $u$ in $L^2 (\Omega)$. Then, up to a subsequence,
	$\{ u^h \}$ $\lambda$-scale converges to $u$ with respect to $\{ \alpha^h \}$.
\end{coro}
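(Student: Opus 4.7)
The plan is to reduce the corollary to the weak-strong pairing identity in $L^2(\Omega)$, leveraging the strong two-scale compatibility supplied by Proposition~\ref{prop:lambdastronglytwoscale}. The candidate $\lambda$-scale limit is obviously the function $u(x)$ regarded as independent of the $y$-variable; the work is simply to verify the defining relation of Definition~\ref{def:lambdascale} and to explain where the subsequence comes from.

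First, I would fix an arbitrary test function $v \in L^2\bigl(\Omega;\, C_{\mathrm{per}}(Y)\bigr)$ and write $(\tau^h v)(x) = v\bigl(x, \alpha^h(x)\bigr)$ as in Proposition~\ref{prop:lambdastronglytwoscale}. Since $\{u^h\}$ converges strongly, it is bounded in $L^2(\Omega)$, so Theorem~\ref{th:gentwoscalecomp} applied to $\{\tau^h\}$ (which is two-scale compatible because it is strongly two-scale compatible by Proposition~\ref{prop:lambdastronglytwoscale}) yields a subsequence, not relabeled, that $\lambda$-scale converges to some $u_0 \in L^2(\Omega \times Y)$; this accounts for the "up to a subsequence" clause. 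The remaining task is to identify $u_0$ with $u$.

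By the strong two-scale compatibility furnished by asymptotic uniform distribution of $\{\alpha^h\}$, one has
\begin{equation*}
    \tau^h v \rightharpoonup \int_Y v(\,\boldsymbol{\cdot}\,, y)\, \mathrm{d}y \quad \textrm{in } L^2(\Omega).
\end{equation*}
Combined with the strong convergence $u^h \to u$ in $L^2(\Omega)$, the standard weak-strong pairing gives
\begin{equation*}
    \lim_{h \to \infty} \int_\Omega u^h(x)\, (\tau^h v)(x)\, \mathrm{d}x
    = \int_\Omega \int_Y u(x)\, v(x,y)\, \mathrm{d}y\, \mathrm{d}x.
\end{equation*}
Comparing with the definition of $\lambda$-scale convergence along the extracted subsequence, the same limit must equal $\int_\Omega \int_Y u_0(x,y)\, v(x,y)\, \mathrm{d}y\, \mathrm{d}x$, and since $v$ ranges over a dense subspace of $L^2(\Omega \times Y)$, this forces $u_0(x,y) = u(x)$ almost everywhere.

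There is no genuine obstacle: the compactness theorem delivers a candidate limit and Proposition~\ref{prop:lambdastronglytwoscale} delivers enough test-function convergence to pin it down. The single delicate point is the weak-strong pairing step, which works because two-scale compatibility bounds $\{\tau^h v\}$ in $L^2(\Omega)$, so the strong convergence of $\{u^h\}$ can be paired with the weak convergence of $\{\tau^h v\}$ in the usual way.
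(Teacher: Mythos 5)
Your proof is correct and takes essentially the route the paper intends: the corollary is presented as a direct consequence of Proposition~\ref{prop:lambdastronglytwoscale}, namely pairing the weak convergence $\tau^h v \rightharpoonup \int_Y v(\, \boldsymbol{\cdot} \, ,y) \, \mathrm{d} y$ in $L^2(\Omega)$ with the strong convergence $u^h \rightarrow u$, which is exactly your central step. The only remark is that your detour through Theorem~\ref{th:gentwoscalecomp} is redundant: the weak--strong pairing already verifies Definition~\ref{def:lambdascale} with limit $u$ for every admissible test function and hence for the full sequence, so no subsequence extraction or density-based identification of $u_0$ is actually needed.
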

Note that the second scale dependence of the $\lambda$-scale limit vanishes, just like how it works for two-scale
limits in the case of strong convergence~\cite{LNgW02}.

\section{Convergence for sequences of $N$-tuples of partial derivatives}
\label{seq:convseqpartder}

Since sequences of partial differential equations in the context of homogenization typically involve $N$-tuples of
partial derivatives of solutions, i.e., $\nabla u^h$, we need to investigate the two-scale limits for these. Indeed,
for traditional periodic two-scale convergence, we have Proposition~\ref{prop:twoscalegrad}~\cite{All92,Ngu89}.
\begin{prop}
	\label{prop:twoscalegrad}
	Let $\{ u^h \}$ be a bounded sequence in $H^1 (\Omega)$ such that the strong limit in $L^2 (\Omega)$ is $u$. Then,
	up to a subsequence, there exists $u_1 \in L^2 \bigl( \Omega; \, W_{\mathrm{per}} (Y) \bigr)$ such that
	for any $v \in L^2 \bigl( \Omega; \, C_{\mathrm{per}} (Y)^N \bigr)$,
	\begin{multline*}
		\lim_{h \rightarrow \infty} \int\limits_{\Omega} \sum_{i=1}^N \,
			\frac{\partial u^h}{\partial x_i}(x) \, v_i(x,hx) \, \mathrm{d} x \\
		= \int\limits_{\Omega} \int\limits_Y \sum_{i=1}^N \, \biggl( \frac{\partial u}{\partial x_i}(x)
			+ \frac{\partial u_1}{\partial y_i}(x,y) \biggr) v_i(x,y) \, \mathrm{d} y \, \mathrm{d} x .
	\end{multline*}
\end{prop}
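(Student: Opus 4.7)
The plan is to follow the classical Allaire--Nguetseng argument. Since $\{u^h\}$ is bounded in $H^1(\Omega)$, the sequence itself and each of the $N$ partial-derivative sequences are bounded in $L^2(\Omega)$. By Theorem~\ref{th:twoscalecomp}, after passing to a common subsequence, $u^h$ two-scale converges to some $u_0 \in L^2(\Omega \times Y)$ and each $\partial u^h/\partial x_i$ two-scale converges to some $\xi_i \in L^2(\Omega \times Y)$. The strong $L^2(\Omega)$-convergence of $\{u^h\}$ to $u$ forces $u_0(x,y)=u(x)$: one tests the two-scale definition against a tensor product $v(x,y)=\varphi(x)\psi(y)$ and uses that $v(\,\boldsymbol{\cdot}\,,h\,\boldsymbol{\cdot}\,) \rightharpoonup \int_Y v(\,\boldsymbol{\cdot}\,,y)\,\mathrm{d}y$ in $L^2(\Omega)$ by Proposition~\ref{prop:twoscaleweak}, so the strong$\,\times\,$weak pairing pins down the $y$-average.

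The identification step uses test functions that are divergence free in $y$. For any $v \in C_c^{\infty}\bigl(\Omega;\, C_{\mathrm{per}}^{\infty}(Y)^N\bigr)$ with $\sum_i \partial v_i/\partial y_i \equiv 0$, integration by parts in $\Omega$ yields
\begin{multline*}
\int_{\Omega} \sum_{i=1}^N \frac{\partial u^h}{\partial x_i}(x)\, v_i(x,hx)\, \mathrm{d}x \\
= -\int_{\Omega} u^h(x) \biggl[ \sum_i \frac{\partial v_i}{\partial x_i}(x,hx) + h \sum_i \frac{\partial v_i}{\partial y_i}(x,hx) \biggr] \mathrm{d}x,
\end{multline*}
and the potentially blow-up $h$-term drops out by the hypothesis on $v$. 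Passing $h \to \infty$ using two-scale convergence on both sides, then integrating by parts in $x$ on the limit (exploiting that $u_0=u$), produces
\begin{equation*}
\int_{\Omega} \int_Y \sum_i \biggl( \xi_i(x,y) - \frac{\partial u}{\partial x_i}(x) \biggr) v_i(x,y)\, \mathrm{d}y\, \mathrm{d}x = 0
\end{equation*}
for every such $v$. Thus $\xi - \nabla u$ is $L^2$-orthogonal to all $y$-solenoidal periodic test fields.

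To finish, I invoke the Weyl-type decomposition on the torus: the orthogonal complement in $L^2_{\mathrm{per}}(Y)^N$ of the (closure of the) $y$-divergence-free periodic vector fields is exactly $\{\nabla_y \psi : \psi \in W_{\mathrm{per}}(Y)\}$. Transported into the Bochner space $L^2\bigl(\Omega;\, L^2_{\mathrm{per}}(Y)^N\bigr)$, this produces a $u_1 \in L^2\bigl(\Omega;\, W_{\mathrm{per}}(Y)\bigr)$ with $\xi_i(x,y) = \partial u/\partial x_i(x) + \partial u_1/\partial y_i(x,y)$, which is the claimed representation. The main obstacle is the measurable lift of the pointwise-in-$x$ orthogonal decomposition: one must either perform the projection directly in the Bochner space and then use the Poincaré--Wirtinger inequality on $Y$ to select the zero-mean representative $u_1(x,\cdot) \in W_{\mathrm{per}}(Y)$, or first identify the structure on a dense set of separable test functions $v(x,y)=\varphi(x)\psi(y)$ with $\mathrm{div}_y \psi = 0$ and glue the pointwise pieces into a jointly measurable $u_1$. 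Density of smooth $y$-solenoidal fields within the full $L^2$ $y$-solenoidal class (by mollification on the torus) justifies extending the tested orthogonality to the relevant closed subspace.
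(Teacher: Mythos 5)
Your proof is correct and is essentially the approach the paper itself points to: Proposition~\ref{prop:twoscalegrad} is stated without proof, with a citation to the classical works of Nguetseng and Allaire, and your argument (two-scale compactness of $\nabla u^h$, identification of the first limit via strong convergence, integration by parts against $y$-solenoidal test fields to kill the $O(h)$ term, then characterizing the orthogonal complement of the divergence-free fields as $\nabla_y W_{\mathrm{per}}(Y)$ --- exactly Lemma~\ref{lem:orthcompl}) is that classical proof. It is also the same skeleton the paper reuses when sketching the generalized Proposition~\ref{prop:lambdatwoscalegrad} (``Green's formula twice, density, orthogonality''), so no further comparison is needed.
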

It should be noted here that bounded sequences in $H^1 (\Omega)$ strongly converge in $L^2 (\Omega)$, and we know that
strongly convergent sequences in $L^2 (\Omega)$ have a subsequence with a two-scale limit with vanishing second scale.

A deciding step towards the homogenization of certain non-periodic problems is to prove the corresponding result for
$\lambda$-scale convergence. We have Proposition~\ref{prop:lambdatwoscalegrad} proved by Jeanette Silfver
in~\cite[Proposition~35]{SilPhD07} (see also~\cite[Section~4]{HolSil06}).
\begin{prop}
	\label{prop:lambdatwoscalegrad}
	Let  $X$ be a Banach space for which
	\begin{equation*}
		\mathcal{D} \bigl( \Omega; \, C_{\mathrm{per}}^{\infty} (Y) \bigr) \subset X
			\subset L^2 \bigl( \Omega; \, L^2_{\mathrm{per}} (Y) \bigr),
	\end{equation*}
	and $\{ \alpha^h \}$ a sequence of functions
	\begin{equation*}
		\alpha^h \, : \, \mathbb{R}^N \rightarrow \mathbb{R}^N
	\end{equation*}
	which are continuous and bijective. We assume also that $\{ \tau^h \}$ defined by
	\begin{equation*}
		\tau^h v = v \bigl( \, \boldsymbol{\cdot} \, ,\alpha^h(\, \boldsymbol{\cdot} \,) \bigr)
	\end{equation*}
	is strongly two-scale compatible with respect to $X$. Furthermore, let
	$Z \subset L^2 \bigl( \Omega; \, L^2_{\mathrm{per}} (Y)^N \bigr)$ be a Banach space such that
	$Z \cap \mathcal{D} \bigl( \Omega; \, C_{\mathrm{per}}^{\infty} (Y)^N \bigr)$ is dense in $Z$, and let $Z^{\perp}$ be
	its orthogonal complement in $L^2 \bigl( \Omega; \, L^2_{\mathrm{per}} (Y)^N \bigr)$. Assume also that, for any
	$v \in Z \cap \mathcal{D} \bigl( \Omega; \, C_{\mathrm{per}}^{\infty} (Y)^N \bigr)$,
	\begin{equation}
		\label{eq:traceofprod}
		\sum_{i=1}^N \sum_{j=1}^N \, \frac{\partial v_i}{\partial y_j}\bigl( \, \boldsymbol{\cdot} \,
		,\alpha^h(\, \boldsymbol{\cdot} \,) \bigr) \, \frac{\partial \alpha^h_j}{\partial x_i}
		\rightharpoonup 0 \quad \textrm{in } L^2 (\Omega).
	\end{equation}
	
	Then, for any bounded sequence $\{ u^h \}$ in $H^1 (\Omega)$ there exists a subsequence such that
	$u^h \rightarrow u$ in $L^2 (\Omega)$ and, for any $v \in X$,
	\begin{equation}
		\label{eq:vansecsca}
		\lim_{h \rightarrow \infty} \int\limits_{\Omega} u^h (x) v\bigl( x,\alpha^h(x) \bigr) \, \mathrm{d} x
		= \int\limits_{\Omega} \int\limits_Y u(x) v(x,y) \, \mathrm{d} y \, \mathrm{d} x, 
	\end{equation}
	and there exists $w_1 \in Z^{\perp}$ such that, for any $v \in X^N$,
	\begin{multline}
		\label{eq:twoscagrad}
		\lim_{h \rightarrow \infty} \int\limits_{\Omega} \sum_{i=1}^N \, \frac{\partial u^h}{\partial x_i}(x)
			\, v_i\bigl( x,\alpha^h(x) \bigr) \, \mathrm{d} x \\
		= \int\limits_{\Omega} \int\limits_Y \sum_{i=1}^N \, \biggl( \frac{\partial u}{\partial x_i}(x)
			+ w_{1,i} (x,y) \biggr) v_i(x,y) \, \mathrm{d} y \, \mathrm{d} x.
	\end{multline}
\end{prop}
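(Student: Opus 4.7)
My plan is to begin with a standard double compactness step. Since $\{u^h\}$ is bounded in $H^1(\Omega)$, the Rellich--Kondrachov theorem yields a subsequence with $u^h \to u$ strongly in $L^2(\Omega)$ (and weakly in $H^1(\Omega)$). Then $\{\nabla u^h\}$ is bounded in $L^2(\Omega)^N$, so I would apply Theorem~\ref{th:gentwoscalecomp} componentwise, with the admissible space $X^N$, to extract a further subsequence and produce $u_0^{\ast} \in L^2(\Omega\times Y)^N$ with
\[
  \int_{\Omega}\nabla u^h(x)\cdot v\bigl(x,\alpha^h(x)\bigr)\,dx \;\longrightarrow\; \int_{\Omega}\int_Y u_0^{\ast}(x,y)\cdot v(x,y)\,dy\,dx
\]
for every $v\in X^N$. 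The goal then reduces to writing $u_0^{\ast}(x,y)=\nabla u(x)+w_1(x,y)$ with $w_1\in Z^{\perp}$.

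\textbf{Vanishing second scale for $u^h$.} Equation \eqref{eq:vansecsca} follows directly from the strong two-scale compatibility of $\{\tau^h\}$: for $v\in X$, Definition~\ref{def:strongcompatible} gives $\tau^h v \rightharpoonup \int_Y v(\,\boldsymbol{\cdot}\,,y)\,dy$ in $L^2(\Omega)$, while $u^h\to u$ strongly in $L^2(\Omega)$, and the strong/weak pairing passes to the limit to produce $\int_{\Omega}\int_Y u(x)v(x,y)\,dy\,dx$.

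\textbf{Identification of $w_1$ via integration by parts.} The central step is to test against $v\in Z\cap \mathcal{D}\bigl(\Omega;\,C^{\infty}_{\mathrm{per}}(Y)^N\bigr)$. Using the compact support of $v$ in $\Omega$ and integrating by parts in $x$,
\[
  \int_{\Omega}\nabla u^h(x)\cdot v\bigl(x,\alpha^h(x)\bigr)\,dx = -\int_{\Omega} u^h(x) \sum_{i=1}^N \frac{\partial}{\partial x_i}\Bigl[v_i\bigl(x,\alpha^h(x)\bigr)\Bigr]\,dx.
\]
Expanding with the chain rule splits the right-hand side into two pieces. The first, $-\int_{\Omega} u^h(x)\, \tau^h(\mathrm{div}_x v)(x)\,dx$, uses that $\mathrm{div}_x v\in X$ (since $v$ is smooth with compact support), and converges, by strong two-scale compatibility paired with strong $L^2$-convergence of $u^h$, to $-\int_{\Omega}\int_Y u(x)\,\mathrm{div}_x v(x,y)\,dy\,dx = \int_{\Omega}\int_Y \nabla u(x)\cdot v(x,y)\,dy\,dx$ after an $x$-integration by parts (justified by the compact support of $v(\,\boldsymbol{\cdot}\,,y)$ in $\Omega$). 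The second piece is exactly $-\int_{\Omega} u^h(x)\,\bigl[\sum_{i,j}(\partial v_i/\partial y_j)(x,\alpha^h(x))\,(\partial \alpha^h_j/\partial x_i)(x)\bigr]\,dx$; by hypothesis \eqref{eq:traceofprod} the bracketed quantity converges weakly to $0$ in $L^2(\Omega)$, and $u^h\to u$ strongly, so this term vanishes in the limit. Matching both expressions for the limit yields
\[
  \int_{\Omega}\int_Y \bigl(u_0^{\ast}(x,y)-\nabla u(x)\bigr)\cdot v(x,y)\,dy\,dx=0
\]
for every $v$ in the assumed dense subset $Z\cap \mathcal{D}(\Omega;\,C^{\infty}_{\mathrm{per}}(Y)^N)$ of $Z$, hence for every $v\in Z$. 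Thus $w_1:=u_0^{\ast}-\nabla u\in Z^{\perp}$, and substituting back gives \eqref{eq:twoscagrad} for all $v\in X^N$.

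\textbf{Main obstacle.} The delicate technical point is the chain-rule computation and its coupling with \eqref{eq:traceofprod}: one must verify that $\mathrm{div}_x v\in X$ so that strong two-scale compatibility can be applied to the $x$-derivative term, and that the $y$-derivative term is exactly the expression appearing in \eqref{eq:traceofprod} so that it is weakly null. Everything else is routine, since the passage from $v$ in the dense subset to arbitrary $v\in X^N$ in \eqref{eq:twoscagrad} follows automatically from having identified $u_0^{\ast}=\nabla u+w_1$ as the two-scale limit of $\nabla u^h$ with respect to $\{\tau^h\}$ over all of $X^N$.
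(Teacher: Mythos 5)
Your proof is correct and follows essentially the same route as the paper's own (very terse) argument: integration by parts twice (the paper's ``Green's formula twice''), the chain-rule split whose $y$-derivative term is killed by assumption \eqref{eq:traceofprod} against the strongly convergent $u^h$, density of $Z\cap\mathcal{D}\bigl(\Omega;\,C^{\infty}_{\mathrm{per}}(Y)^N\bigr)$ in $Z$, and the orthogonality conclusion $w_1=u_0^{\ast}-\nabla u\in Z^{\perp}$. Your derivation of \eqref{eq:vansecsca} directly from Definition~\ref{def:strongcompatible} rather than via Corollary~\ref{cor:sublambdascale} is an inessential (and arguably cleaner) variant, since strong two-scale compatibility is assumed outright in the hypotheses.
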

\begin{proof}
	Property \eqref{eq:vansecsca} is given by Corollary~\ref{cor:sublambdascale}. By using Green's formula twice, using
	assumption \eqref{eq:traceofprod}, and utilizing density, property \eqref{eq:twoscagrad} follows from
	orthogonality.
\end{proof}
\begin{rema}
	The original proof of Jeanette Silfver with all details is found in~\cite[Subsection~2.4.2]{SilPhD07}.
\end{rema}

\section{Elliptic homogenization}
\label{sec:ellhomo}

In this section we present a homogenization result for elliptic problems governed by the sequence $\{ \alpha^h \}$
introduced earlier in this paper. Such results where first published in~\cite{SilPhD07}, and in
Sections~\ref{sec:nonpertwodimex} and~\ref{sec:numill} we present a special case in two dimensions and perform a
numerical experiment, respectively.

The foundation upon which modern homogenization rests was erected in 1967-68 when
Sergio Spagnolo developed the concept of G-convergence of sequences of symmetric matrices~\cite{Spa67,Spa68}, which has
later been generalized by Fran\c{c}ois Murat to H-convergence where the symmetry assumption is dropped, but at the cost
of an imposed requirement on the sequence of flows~\cite{Mur78}. We begin by introducing a space of matrix valued
functions according to Definition~\ref{def:Mspace}.
\begin{defin}
	\label{def:Mspace}
	If $\infty > r \geqslant s > 0$ and $\mathcal{O} \subset \mathbb{R}^N$ is open, the matrix valued function
	$M \in L^{\infty} (\mathcal{O})^{N \times N}$ is said to belong to $\mathcal{M} (r,s; \, \mathcal{O})$ if the
	system of structural conditions
	\begin{equation*}
		\left\{
			\begin{aligned}
				\sup_{|\xi|=1} \biggl| \Bigl( \sum_{j=1}^N \, m_{ij} \xi_j \Bigr)_{i=1,\ldots,N} \biggr|
					&\leqslant r, \quad \textrm{(bounded)} \\
				\inf_{|\xi|=1} \sum_{i=1}^N \sum_{j=1}^N \, \xi_i m_{ij} \xi_j
					&\geqslant s, \quad \textrm{(coercive)}
			\end{aligned}
		\right.
	\end{equation*}
	is satisfied a.e.~in $\mathcal{O}$. Furthermore, the space $\mathcal{M}_{\mathrm{per}}(r,s; \, Y)$ consists of
	those functions in $\mathcal{M} \bigl( r,s; \, \mathbb{R}^N \bigr)$ which are $Y$-periodic.
\end{defin}
We can now give Definition~\ref{def:ellHconv} defining the notion of H-convergence.
\begin{defin}
	\label{def:ellHconv}
	Let $\{ A^h \}$ be a sequence in $\mathcal{M}(r,s; \, \Omega)$, and let $B \in \mathcal{M} (r',s'; \, \Omega )$.
	Furthermore, assume that, for any $f \in H^{-1} (\Omega)$, the sequence of solutions $\{ u^h \}$ in
	$H^1_0 (\Omega)$ to the sequence of problems
	\begin{equation}
		\label{eq:PDEseq}
		\left\{
			\begin{aligned}
				- \sum_{i=1}^N \sum_{j=1}^N \, \frac{\partial}{\partial x_i} \biggl( a^h_{ij} \,
					\frac{\partial u^h}{\partial x_j} \biggr)	&= f	\qquad \textrm{in } \Omega \\
				u^h 											&= 0	\qquad \textrm{on } \partial \Omega
			\end{aligned}
		\right.
	\end{equation}
	satisfies
	\begin{equation}
		\label{eq:limits}
		\left\{
			\begin{aligned}
				u^h						&\rightharpoonup u			\qquad \textrm{in } H^1_0 (\Omega) \\
				\biggl( \sum_{j=1}^N \, a^h_{ij} \, \frac{\partial u^h}{\partial x_j} \biggr)_{i=1,\ldots,N}
										&\rightharpoonup \biggl( \sum_{j=1}^N \, b_{ij} \,
					\frac{\partial u}{\partial x_j} \biggr)_{i=1,\ldots,N} \quad \textrm{in } L^2 (\Omega)^N
			\end{aligned}
		\right. ,
	\end{equation}
	where $u$ is the unique solution to
	\begin{equation}
		\label{eq:homogenized}
		\left\{
			\begin{aligned}
				- \sum_{i=1}^N \sum_{j=1}^N \, \frac{\partial}{\partial x_i} \biggl( b_{ij} \,
					\frac{\partial u}{\partial x_j} \biggr)	&= f	\qquad \textrm{in } \Omega \\
				u 											&= 0	\qquad \textrm{on } \partial \Omega
			\end{aligned}
		\right. .
	\end{equation}
	Then $\{ A^h \}$ H-converges to $B$.
\end{defin}
\begin{rema}
	In the literature, H-convergence is often called G-convergence. In this paper, we clearly separate the general
	notion of H-convergence from the special case of G-convergence which deals with symmetric matrices exclusively.
\end{rema}
When an H-limit has been found, one has homogenized the sequence of partial differential equations \eqref{eq:PDEseq},
and \eqref{eq:homogenized} is the homogenized problem. For sequences of matrices which are periodic, we get
Theorem~\ref{theo:elliptichomo} for the homogenization in this case~\cite{Ngu89,All92}.
\begin{theo}
	\label{theo:elliptichomo}
	Suppose $A \in \mathcal{M}_{\mathrm{per}}(r,s; \, Y)$. Then $\{ A(h\, \boldsymbol{\cdot} \,) \}$
	H-converges to $B$ given by
	\begin{equation*}
		\bigl( b_{ij} \bigr)_{i,j=1,\ldots,N} = \Biggl( \int\limits_Y \sum_{k=1}^N \, a_{ik}(y) \biggl( \delta_{kj}
			+ \frac{\partial z_j}{\partial y_k}(y) \biggr) \, \mathrm{d} y \Biggr)_{i,j=1,\ldots,N} ,
	\end{equation*}
	where $z \in W_{\mathrm{per}} (Y)^N$ uniquely solves the local problem
	\begin{equation}
	\label{eq:localproblem}
		- \Biggl( \sum_{i=1}^N \sum_{k=1}^N \, \frac{\partial}{\partial y_i} \! \biggl\lgroup \! a_{ik}
			\Bigl( \delta_{kj} + \frac{\partial z_j}{\partial y_k} \Bigr) \! \biggr\rgroup \! \Biggr)_{j=1,\ldots,N}
				= 0 \qquad \textrm{in } Y.
	\end{equation}
\end{theo}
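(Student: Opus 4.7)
The plan is to deploy the classical Nguetseng--Allaire two-scale argument, using Proposition~\ref{prop:twoscalegrad} as the crucial compactness tool for $\nabla u^h$. Given $f \in H^{-1}(\Omega)$, I would start by testing the weak formulation of \eqref{eq:PDEseq} against $u^h$ itself: coercivity with constant $s$ yields a uniform bound for $\{u^h\}$ in $H^1_0(\Omega)$. Extracting a subsequence, $u^h \rightharpoonup u$ in $H^1_0(\Omega)$, and strongly in $L^2(\Omega)$ by Rellich--Kondrachov. Proposition~\ref{prop:twoscalegrad} then furnishes $u_1 \in L^2\bigl(\Omega; \, W_{\mathrm{per}}(Y)\bigr)$ such that $\nabla u^h$ two-scale converges to $\nabla u(x) + \nabla_y u_1(x,y)$.

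Next I would insert into the weak formulation the oscillating test function $\varphi(x) + h^{-1}\phi(x)\psi(hx)$ with $\varphi, \phi \in \mathcal{D}(\Omega)$ and $\psi \in C^{\infty}_{\mathrm{per}}(Y)$, whose gradient decomposes as $\nabla \varphi(x) + \phi(x)(\nabla_y \psi)(hx) + h^{-1}\psi(hx)\nabla \phi(x)$; the last contribution vanishes in $L^2(\Omega)$. Since $A$ is $Y$-periodic and bounded, the product $A(y) w(x,y)$ with any $w \in L^2\bigl(\Omega;\, C_{\mathrm{per}}(Y)\bigr)^N$ still lies in $L^2\bigl(\Omega;\, C_{\mathrm{per}}(Y)\bigr)^N$, hence is admissible in Definition~\ref{def:twoscaleconv}. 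Passing to two-scale limits on both sides yields
\begin{equation*}
\int_{\Omega} \int_{Y} A(y) \bigl( \nabla u + \nabla_y u_1 \bigr) \cdot \bigl( \nabla \varphi + \phi \, \nabla_y \psi \bigr) \, \mathrm{d} y \, \mathrm{d} x = \langle f, \varphi \rangle.
\end{equation*}

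Taking $\varphi \equiv 0$ gives, for a.e.\ $x \in \Omega$, the cell variational problem characterizing $u_1(x,\, \boldsymbol{\cdot} \,) \in W_{\mathrm{per}}(Y)$; by linearity in $\nabla u(x)$, one writes $u_1(x,y) = \sum_{j=1}^N (\partial u/\partial x_j)(x)\, z_j(y)$ where the $z_j$ are precisely the solutions of \eqref{eq:localproblem} (existence and uniqueness in $W_{\mathrm{per}}(Y)$ follow from Lax--Milgram using coercivity of $A$). Substituting this expression for $u_1$ and taking $\phi \equiv 0$ in the limit identity identifies the flux's weak limit as $B \nabla u$ with $B$ given by the stated $Y$-average, and hence \eqref{eq:homogenized}. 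Coercivity of $B$ (inherited from that of $A$) forces uniqueness of $u$, so the whole sequence converges, not merely the extracted subsequence, and Definition~\ref{def:ellHconv} is satisfied.

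The main obstacle is the simultaneous passage to the limit in the flux term: one needs both the two-scale convergence of $\nabla u^h$ from Proposition~\ref{prop:twoscalegrad} and the admissibility, via Definition~\ref{def:twoscaleconv}, of the product of $A$ with the oscillating test function. Periodicity of $A$ is essential here --- without it, $A(hx)$ composed with the test function would not fit into the admissible test-function class, and one would have to appeal instead to a genuine $\lambda$-scale framework in the spirit of Proposition~\ref{prop:lambdatwoscalegrad}.
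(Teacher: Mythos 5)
The paper offers no proof of Theorem~\ref{theo:elliptichomo}: it is quoted from the cited references of Nguetseng and Allaire. Your argument is precisely the classical proof given there --- energy estimate and extraction of a subsequence, two-scale compactness of $\nabla u^h$ via Proposition~\ref{prop:twoscalegrad}, oscillating test functions $\varphi + h^{-1}\phi\,\psi(h\,\boldsymbol{\cdot}\,)$, decoupling of the two-scale limit identity into the cell problem and the macroscopic problem, separation of variables $u_1 = \sum_j (\partial u/\partial x_j)\, z_j$ with Lax--Milgram on $W_{\mathrm{per}}(Y)$, and the uniqueness-of-the-limit argument to upgrade from a subsequence to the whole sequence. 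So the route is the intended one.

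One step is misstated, and it is exactly the delicate point of this proof. You assert that for $A \in \mathcal{M}_{\mathrm{per}}(r,s;\,Y)$ the product $A(y)w(x,y)$, $w \in L^2\bigl(\Omega;\,C_{\mathrm{per}}(Y)\bigr)^N$, again lies in $L^2\bigl(\Omega;\,C_{\mathrm{per}}(Y)\bigr)^N$. By Definition~\ref{def:Mspace} the entries of $A$ are merely $L^\infty$ and $Y$-periodic, so $y \mapsto A(y)w(x,y)$ need not be continuous in $y$, and the product leaves the admissible class of Definition~\ref{def:twoscaleconv}. This matters precisely where you invoke it: when testing the two-scale convergence of $\nabla u^h$ (and of the flux $A(h\,\boldsymbol{\cdot}\,)\nabla u^h$) against the transpose of $A$ applied to $\nabla\varphi(x) + \phi(x)\nabla_y\psi(y)$. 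The standard repair is to enlarge the class of admissible test functions: for $\phi \in \mathcal{D}(\Omega)$ and $\chi \in L^2_{\mathrm{per}}(Y)$ one has $\lVert \phi\,\chi(h\,\boldsymbol{\cdot}\,)\rVert_{L^2(\Omega)} \rightarrow \lVert \phi\chi\rVert_{L^2(\Omega\times Y)}$ by the mean-value property of periodic functions, and the two-scale limit identity extends by density to such products (Allaire's ``admissible'' test functions), which do cover the functions you need. With that amendment --- or under the additional hypothesis $A \in C_{\mathrm{per}}(Y)^{N\times N}$, which is what the paper actually imposes from Theorem~\ref{theo:lambdahomo} onward --- your proof is complete; the points you leave implicit (coercivity of $B$, hence well-posedness of \eqref{eq:homogenized}, and identification of the weak flux limit via Proposition~\ref{prop:twoscaleweak}) are standard and correctly placed.
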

We note that it is the term containing the local problem solution $z$ which makes the H-limit $B$ to deviate from the
average of $A$ over each periodicity cell $Y$. It is due to the $u_1$ term in the two-scale convergence for partial
derivatives and is obtained through a simple separation of variables of $u_1$ with $\nabla u$ as the $x$-dependent part
and $z$ as the $y$-dependent part.

In the remainder of this paper, $A^h$ in the sequence  of problems \eqref{eq:PDEseq} will be the composition $A \circ \alpha^h$.

Below we present the result on non-periodic homogenization from~\cite{SilPhD07} announced in the beginning of this
section. We start by giving Definition~\ref{def:typeH}~\cite[Definition~51]{SilPhD07}.
\begin{defin}
	\label{def:typeH}
	The sequence $\{ \alpha^h \}$ is of type $\mathrm{H}^\zeta_X$ if the following three conditions hold:
	\begin{itemize}
		\item[(i)]		The sequence $\{ \tau^h \}$ of operators $\tau^h \, : \, X \rightarrow L^2 (\Omega)$ defined by
						\begin{equation*}
							\tau^h v = ´v \bigl( \, \boldsymbol{\cdot} \,
								, \alpha^h(\, \boldsymbol{\cdot} \,) \bigr), \qquad v \in X
						\end{equation*}
						is strongly two-scale compatible with respect to a Banach space $X$ for which
						\begin{equation*}
							\mathcal{D} \bigl( \Omega; \, C^{\infty}_{\mathrm{per}} (Y) \bigr) \subset X
								\subset L^2 \bigl( \Omega; \, L^2_{\mathrm{per}} (Y) \bigr).
						\end{equation*}
		\item[(ii)]		There exists a sequence $\{ p^h \}$ strongly convergent to zero in $H^1 (\Omega)$ such that
						\begin{equation*}
							p^h \, \Bigl( \frac{\partial \alpha^h_j}{\partial x_i} \Bigr)_{i,j=1,\ldots,N}
								\rightarrow \bigl( \pi_{ij} \bigr)_{i,j=1,\ldots,N}
									\qquad \textrm{in } L^2(\Omega)^{N \times N},
						\end{equation*}
						where $\Pi$ is a diagonal matrix with
						\begin{equation*}
							\bigl( \pi_{ii} \bigr)_{i=1,\ldots,N}
							= \bigl( \zeta_i \bigr)_{i=1,\ldots,N} \in L^{\infty}(\Omega)^N.
						\end{equation*}
		\item[(iii)] 	There exists a Banach space $Z \subset X^N$ for which
						$Z \cap \mathcal{D} \bigl( \Omega; \, C_{\mathrm{per}}^{\infty} (Y)^N \bigr)$
						is dense in $Z$, and such that, for any
						$v \in Z \cap \mathcal{D} \bigl( \Omega; \, C_{\mathrm{per}}^{\infty} (Y)^N \bigr)$,
						\begin{equation*}
							\sum_{i=1}^N \sum_{j=1}^N \, \frac{\partial v_i}{\partial y_j}
								\bigl( \, \boldsymbol{\cdot} \, ,\alpha^h(\, \boldsymbol{\cdot} \,) \bigr) \,
									\frac{\partial \alpha^h_j}{\partial x_i} \rightharpoonup 0
										\quad \textrm{in } L^2 (\Omega).
						\end{equation*}
	\end{itemize}
\end{defin}
From Definition~\ref{def:typeH} and Proposition~\ref{prop:lambdatwoscalegrad}, we have the homogenization result
of Theorem~\ref{theo:lambdahomo}~\cite[Theorem~52]{SilPhD07}.
\begin{theo}
	\label{theo:lambdahomo}
	Assume that $\Omega$ has a Lipschitz continuous boundary and that
		\begin{equation*}
		A \in C_{\mathrm{per}} (Y)^{N \times N} \cap \mathcal{M} \bigl( r,s; \, \mathbb{R}^N \bigr).
	\end{equation*}
	Let $\{ \alpha^h\}$ be of type $\mathrm{H}^\zeta_{L^2(\Omega; \, C_{\mathrm{per}}(Y))}$.
	Then $\{ A \circ \alpha^h \}$ H-converges to $B$ given by
	\begin{equation*}
		\biggl( \sum_{j=1}^N \, b_{ij} \frac{\partial u}{\partial x_j} \biggr)_{i=1,\ldots,N}
			= \Biggl( \int\limits_Y \sum_{j=1}^N \, a_{ij}(y) \biggl( \frac{\partial u}{\partial x_j}
				+ w_{1,j}(\, \boldsymbol{\cdot} \,,y) \biggr) \, \mathrm{d} y \Biggr)_{i=1,\ldots,N},
	\end{equation*}
	$u \in H^1_0(\Omega)$ being the weak limit of the sequence $\{ u^h \}$ of solutions to \eqref{eq:PDEseq},
	if $u \in H^1_0(\Omega)$ and $w_1 \in Z^\perp \subset L^2 \bigl( \Omega; \, L^2_{\mathrm{per}}(Y)^N \bigr)$
	uniquely solve the homogenized problem
	\begin{equation}
		\label{eq:homoprob}
		\int\limits_{\Omega} \int\limits_Y \sum_{i=1}^N \sum_{j=1}^N \, a_{ij}(y)
			\biggl( \frac{\partial u}{\partial x_j}(x) + w_{1,j}(x,y) \biggr) \frac{\partial v}{\partial x_i}(x) \,
				\mathrm{d} y \, \mathrm{d} x \\
				= \int\limits_{\Omega} f(x) v(x) \, \mathrm{d} x
	\end{equation}
	for all $v \in H^1_0 (\Omega)$, and, for each $x \in \Omega$, the local problem
	\begin{equation}
		\label{eq:locprob}
		\sum_{i=1}^N \, \zeta_i(x) \int\limits_Y \sum_{j=1}^N \, a_{ij}(y) \biggl( \frac{\partial u}{\partial x_j}(x)
			+ w_{1,j}(x,y) \biggr) \frac{\partial v}{\partial y_i}(y) \, \mathrm{d} y = 0
	\end{equation}
	for all $v \in W_{\mathrm{per}} (Y)$.
\end{theo}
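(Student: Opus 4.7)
The plan is to follow the standard H-convergence strategy adapted to the $\lambda$-scale setting, with the test functions tuned to the oscillating bijection $\alpha^h$ via the auxiliary sequence $p^h$ supplied by condition (ii) of type $\mathrm{H}^\zeta_X$. First I would fix $f \in H^{-1}(\Omega)$, note that $A \circ \alpha^h \in \mathcal{M}(r,s;\Omega)$ (since coercivity and boundedness are preserved by composition with the bijection $\alpha^h$), and deduce via the Lax--Milgram theorem a uniform bound on the weak solutions $u^h \in H^1_0(\Omega)$ of \eqref{eq:PDEseq}. Up to extraction $u^h \rightharpoonup u$ in $H^1_0(\Omega)$ and $u^h \to u$ in $L^2(\Omega)$ by Rellich--Kondrachov. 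Applying Proposition~\ref{prop:lambdatwoscalegrad} with the choices $X = L^2(\Omega; C_{\mathrm{per}}(Y))$ and $Z$ prescribed by condition (iii) of Definition~\ref{def:typeH} yields a $w_1 \in Z^\perp \subset L^2(\Omega; L^2_{\mathrm{per}}(Y)^N)$ realising the $\lambda$-scale limit \eqref{eq:twoscagrad} of $\nabla u^h$.

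Next I would derive the homogenized equation \eqref{eq:homoprob}. Test \eqref{eq:PDEseq} against $v \in \mathcal{D}(\Omega)$: each function $x \mapsto a_{ij}(\alpha^h(x))\,\partial v/\partial x_i(x)$ is the evaluation along $\alpha^h$ of a legitimate element of $L^2(\Omega; C_{\mathrm{per}}(Y))$, since $A$ is continuous and $Y$-periodic. Strong two-scale compatibility of $\{\tau^h\}$ and \eqref{eq:twoscagrad} then allow direct passage to the limit on the left-hand side, producing exactly \eqref{eq:homoprob}; the right-hand side is unchanged by the choice of $v$. Density of $\mathcal{D}(\Omega)$ in $H^1_0(\Omega)$ extends the identity to all test functions there.

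The delicate step is the local problem \eqref{eq:locprob}, and this is where I expect the main obstacle. I would insert into \eqref{eq:PDEseq} the oscillating test function $x \mapsto p^h(x)\,\phi(x)\,\psi(\alpha^h(x))$ with $\phi \in \mathcal{D}(\Omega)$ and $\psi \in C^{\infty}_{\mathrm{per}}(Y)$ chosen so that the vector field $\phi\,\psi$ lies in $Z \cap \mathcal{D}(\Omega; C^\infty_{\mathrm{per}}(Y)^N)$ upon multiplication by the appropriate unit vector; differentiating produces three groups of terms. Two of them carry a factor that tends strongly to zero in $L^2(\Omega)$ (either $\nabla p^h$ or $p^h$ itself, against bounded $\nabla u^h$), so they vanish in the limit by Cauchy--Schwarz. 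The surviving contribution is the one where $p^h(x)\,\partial \alpha^h_j/\partial x_i$ converges in $L^2(\Omega)$ to the diagonal $\zeta_i \delta_{ij}$ by condition (ii); pairing this with the weak $L^2$ limit of $a_{ij}(\alpha^h)\,\partial u^h/\partial x_k$ given by \eqref{eq:twoscagrad} and \eqref{eq:vansecsca} yields, after localisation in $\phi$ and density in $\psi$ along $W_{\mathrm{per}}(Y)$, precisely \eqref{eq:locprob} for a.e.\ $x \in \Omega$. The trick is to check that the three groups split cleanly and that the product limit in the surviving term is justified; weak--strong convergence together with the diagonal structure of $\Pi$ and the coercivity-based bound on $\partial u^h/\partial x_k$ is what makes this go through.

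Finally, I would verify uniqueness of the pair $(u,w_1)$ solving \eqref{eq:homoprob}--\eqref{eq:locprob}, which follows because the local problem is an elliptic variational problem in $W_{\mathrm{per}}(Y)$ (coercive by the lower bound $s$ on $A$) whose solution depends linearly on $\nabla u(x)$, and substitution into \eqref{eq:homoprob} produces a coercive problem for $u \in H^1_0(\Omega)$. This uniqueness upgrades subsequential convergence to convergence of the whole sequence and defines the H-limit $B$ through the bracketed integral, completing the identification of the homogenized matrix.
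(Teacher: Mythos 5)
Your proposal follows essentially the same route as the paper's (very terse) proof: a weak formulation of \eqref{eq:PDEseq}, passage to the limit via Proposition~\ref{prop:lambdatwoscalegrad} to obtain the homogenized problem \eqref{eq:homoprob}, and oscillating test functions of the form $p^h \phi \, \psi\bigl(\alpha^h(\, \boldsymbol{\cdot} \,)\bigr)$ combined with condition (ii) of Definition~\ref{def:typeH} and the variational lemma to obtain the local problem \eqref{eq:locprob}. One small correction: the scalar $\psi$ in the local-problem step should range over all of $C^{\infty}_{\mathrm{per}}(Y)$, dense in $W_{\mathrm{per}}(Y)$, and should not be constrained so that $\phi\psi$ times a unit vector lies in $Z$ --- the space $Z$ enters only through Proposition~\ref{prop:lambdatwoscalegrad} to place $w_1$ in $Z^{\perp}$, and restricting $\psi$ in that way would undercut the density argument you invoke at the end.
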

\begin{proof}
	First we make a weak formulation of the sequence of problems \eqref{eq:PDEseq}.

	The homogenized problem \eqref{eq:homoprob} is obtained by passing the weak formulation to the limit, using
	Proposition~\ref{prop:lambdatwoscalegrad}.

	The local problem \eqref{eq:locprob} is derived by first choosing appropriate test functions, then utilizing the
	fact that $\{ \alpha^h\}$ is of type $\mathrm{H}^\zeta_{L^2(\Omega; \, C_{\mathrm{per}}(Y))}$, and finally employing
	the variational lemma.
\end{proof}
\begin{rema}
	The original proof of Jeanette Silfver with all details is found in~\cite[Subsection~3.5.1]{SilPhD07}.
\end{rema}

\section{A non-periodic and two-dimensional example}
\label{sec:nonpertwodimex}

In order to justify the concept of homogenization of $\lambda$-structures, we take a look at a non-periodic
two-dimensional example (thus, fixing $N = 2$ from now on). Let $\bigl\{ \alpha^h \bigr\}$
and $\Omega$ be given by
\begin{equation}
	\label{eq:alphahexample}
	\left\{
		\begin{aligned}
				\alpha^h_1 (x) &= h x_1 \\
				\alpha^h_2 (x) &= h x_2 |x_2|
		\end{aligned}
	\right., \qquad x \in \mathbb{R}^2,
\end{equation}
and $\Omega = (a_1,b_1) \times (a_2,b_2)$, respectively, where $b_i > a_i > 0$, $i = 1,2$. Thus, $\Omega$ is an open
interval in the first quadrant of $\mathbb{R}^2$.
\begin{figure}[htp]
	\begin{center}
		\includegraphics[scale=0.7]{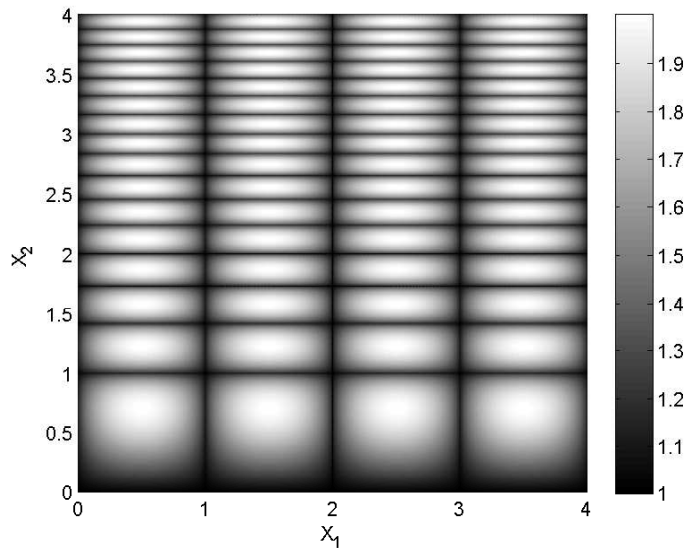}
	\end{center}
	\caption{Some entry of $A \circ \alpha^h$, $h = 1$, in the first quadrant.}
	\label{fig:twodimlambdastruct}
\end{figure}
In Figure~\ref{fig:twodimlambdastruct} we depict in the first quadrant the behaviour of some entry of $A \circ \alpha^h$
for some $(0,1)^2$-periodic matrix $A$. (To be specific, we have chosen an entry on the form
$1 + |\sin \pi y_1 \, \sin \pi y_2|^{1/2}$.) Note how the oscillation frequency increases with growing $x_2$, while the
periodicity is preserved in the $x_1$ direction, as expected.

To homogenize the sequence $\{ A \circ \alpha^h \}$, we must first check that $\{ \alpha^h \}$ is asymptotically
uniformly distributed on $\Omega$. Indeed, we have Proposition~\ref{prop:AUDexample}.
\begin{prop}
	\label{prop:AUDexample}
	The sequence $\{ \alpha^h \}$ given in \eqref{eq:alphahexample} is asymptotically uniformly distributed on
	$\Omega$.
\end{prop}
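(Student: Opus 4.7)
The plan is to verify inequality \eqref{eq:AUDequation} by direct computation, exploiting the fact that on $\Omega$ (which is bounded away from the $x_1$-axis) the map $\alpha^h$ is a diffeomorphism with $\alpha^h_2(x)=hx_2^2$. I will parametrize the covering cubes explicitly, compute the preimages as rectangles, and then use a Taylor expansion of $\sqrt{\, \boldsymbol{\cdot}\,}$ to control the ratio uniformly.

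Concretely, write the covering cube as $\bar Y^j=[j_1,j_1+1]\times[j_2,j_2+1]$ with $(j_1,j_2)\in\mathbb{Z}_+^2$, and its subcube as
$\bar Y^j_k=[j_1+k_1/n,j_1+(k_1+1)/n]\times[j_2+k_2/n,j_2+(k_2+1)/n]$.
For $j$ such that $\Omega^h_j$ does not meet $\partial\Omega$, the preimage is the full rectangle
\begin{equation*}
\Omega^h_j=\bigl[\tfrac{j_1}{h},\tfrac{j_1+1}{h}\bigr]\times\bigl[\sqrt{j_2/h},\sqrt{(j_2+1)/h}\bigr],
\end{equation*}
and analogously
\begin{equation*}
\Omega^h_{j,k}=\bigl[\tfrac{j_1}{h}+\tfrac{k_1}{nh},\tfrac{j_1}{h}+\tfrac{k_1+1}{nh}\bigr]\times\bigl[\sqrt{(j_2+k_2/n)/h},\sqrt{(j_2+(k_2+1)/n)/h}\bigr].
\end{equation*}
A direct computation then gives
\begin{equation*}
\frac{\lambda(\Omega^h_{j,k})}{\lambda(\Omega^h_j)}=\frac{1}{n}\cdot\frac{\sqrt{j_2+(k_2+1)/n}-\sqrt{j_2+k_2/n}}{\sqrt{j_2+1}-\sqrt{j_2}},
\end{equation*}
which is to be compared with $\lambda(Y_k)=1/n^2$.

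Next I use the hypothesis $a_2>0$: since $\Omega^h_j\subset\Omega$ with $\Omega\cap\partial\Omega=\emptyset$ intersection, the second coordinate of every point in $\Omega^h_j$ exceeds $a_2$, so $j_2/h\geqslant a_2^2$, and thus $j_2\geqslant h a_2^2\to\infty$ as $h\to\infty$. Applying the Taylor expansion $\sqrt{j_2+t}=\sqrt{j_2}+\tfrac{t}{2\sqrt{j_2}}+O(j_2^{-3/2})$ uniformly for $t\in[0,1]$ gives $\sqrt{j_2+t_1}-\sqrt{j_2+t_0}=\tfrac{t_1-t_0}{2\sqrt{j_2}}\bigl(1+O(1/j_2)\bigr)$, whence the ratio above equals $\tfrac{1}{n^2}\bigl(1+O(1/j_2)\bigr)=\tfrac{1}{n^2}+O(1/h)$. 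This is independent of the particular $j,k$ chosen and goes to zero as $h\to\infty$, so we can take $\epsilon(h)=C/h$.

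The main obstacle is purely bookkeeping: making sure that the assumption ``$\Omega^h_j$ does not intersect $\partial\Omega$'' is being used correctly to rule out truncation, and that the lower bound $j_2\geqslant ha_2^2$ indeed yields a uniform-in-$(j,k)$ Taylor error; both are immediate once one uses $a_2>0$. No subtlety in the $x_1$-direction arises, since there $\alpha^h_1$ is just a linear scaling for which the ratio is exactly $1/n$.
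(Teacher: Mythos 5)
Your proof is correct and follows essentially the same route as the paper: you compute the same explicit preimage rectangles and Lebesgue measures, arrive at the identical ratio $\frac{1}{n}\,\frac{\sqrt{j_2+(k_2+1)/n}-\sqrt{j_2+k_2/n}}{\sqrt{j_2+1}-\sqrt{j_2}}$, and conclude via $j_2\to\infty$ uniformly; the only cosmetic difference is that the paper rationalizes the quotient by conjugates where you Taylor-expand, and your lower bound $j_2\geqslant h a_2^2$ is in fact sharper than the paper's stated $\mathcal{O}\bigl(\sqrt{h}\,\bigr)$. The one item the paper records that you omit is the quick verification of the standing assumptions preceding Definition~\ref{def:AUD} (that $\Omega$ is covered by finitely many $\Omega^h_{\hat{\jmath}}$ for each $h$, and that these sets fit in balls of radius tending to zero), but both checks are immediate from your explicit rectangles.
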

\begin{proof}
	We define $Y^{\hat{\jmath}} = \hat{\jmath} + Y$ and $Y_{\hat{k}} = \tfrac{1}{n} (\hat{k} + Y)$, where
	$\hat{\jmath} \in \mathbb{Z}^2$, $n \in \mathbb{Z}_+$ and $\hat{k} \in \{ 0,\ldots,n-1 \}^2$. Furthermore,
	$Y^{\hat{\jmath}}_{\hat{k}} = \hat{\jmath} + Y_{\hat{k}}$. Clearly, there exist bijections
	\begin{equation*}
		\left\{
			\begin{aligned}
				&\mathcal{J} \, : \, & \{ 1,2,\ldots \}		&\rightarrow \mathbb{Z}^2 \\
				&\mathcal{K} \, : \, & \{ 1,\ldots,n^2 \}	&\rightarrow \{ 0,\ldots,n-1 \}^2
			\end{aligned}
		\right.
	\end{equation*}
	such that $\{ \bar{Y}^{\mathcal{J}(j)} \}_{j=1}^{\infty}$ is a covering of $\mathbb{R}^2$, and
	$\bigl\{ \bar{Y}^{\mathcal{J}(j)}_{\mathcal{K}(k)} \bigr\}_{k=1}^{n^2}$ is a covering of
	$\bar{Y}^{\mathcal{J}(j)}$. This merely shows that the way of enumerating the squares in this proof is equivalent
	to the way in Definition~\ref{def:AUD}.

	We have, for each $\hat{\jmath} = (j_1,j_2) \in \mathbb{Z}^2$ and $\hat{k} = (k_1,k_2) \in \{ 0,\ldots,n-1 \}^2$,
	\begin{align*}
		\Omega^h_{\hat{\jmath}}			&= (\alpha^h)^{-1} (\bar{Y}^{\hat{\jmath}}) \cap \Omega =
			\Bigl( \bigl[ a_{1; \hat{\jmath}}^h, b_{1; \hat{\jmath}}^h \bigr] \times
				\bigl[  a_{2; \hat{\jmath}}^h, b_{2; \hat{\jmath}}^h \bigr] \Bigr) \cap \Omega \nonumber \\
		\intertext{and}
		\Omega^h_{\hat{\jmath},\hat{k}}	&= (\alpha^h)^{-1} (\bar{Y}^{\hat{\jmath}}_{\hat{k}}) \cap \Omega=
			\Bigl( \bigl[ a_{1; \hat{\jmath},\hat{k}}^h, b_{1; \hat{\jmath},\hat{k}}^h \bigr] \times
				\bigl[  a_{2; \hat{\jmath},\hat{k}}^h, b_{2; \hat{\jmath},\hat{k}}^h \bigr] \Bigr)
					\cap \Omega , \nonumber \\
	\end{align*}
	where
	\begin{align*}
		&\left\{
			\begin{aligned}
				a_{1; \hat{\jmath}}^h	&= \frac{j_1}{h},
					&\qquad	b_{1; \hat{\jmath}}^h	&= \frac{j_1+1}{h} \\
				a_{2; \hat{\jmath}}^h	&= \mathrm{sgn} (j_2) \sqrt{\frac{|j_2|}{h}},
					&\qquad	b_{2; \hat{\jmath}}^h	&= \mathrm{sgn} (j_2+1) \sqrt{\frac{|j_2+1|}{h}}
			\end{aligned}
		\right.
		\intertext{and}
		&\left\{
			\begin{aligned}
				a_{1; \hat{\jmath},\hat{k}}^h			&= \frac{j_1+\tfrac{k_1}{n}}{h},
					& b_{1; \hat{\jmath},\hat{k}}^h			&= \frac{j_1+\tfrac{k_1+1}{n}}{h}  \\
				a_{2; \hat{\jmath},\hat{k}}^h			&= \mathrm{sgn} \bigl( j_2 \! + \! \tfrac{k_2}{n} \bigr)
					\sqrt{\frac{\bigl| j_2 \! + \! \tfrac{k_2}{n} \bigr|}{h}}, 
					&	b_{2; \hat{\jmath},\hat{k}}^h		&= \mathrm{sgn} \bigl( j_2 \! + \! \tfrac{k_2+1}{n} \bigr)
					\sqrt{\frac{\bigl| j_2 \! + \! \tfrac{k_2+1}{n} \bigr|}{h}}
			\end{aligned}
		\right. \; .
	\end{align*}
	In order to cover $\Omega$, we can for each fixed $h$ apparently do this with a finite union of
	$\Omega^h_{\hat{\jmath}}$ sets. The smallest $j_1$ required for the covering will go like $\mathcal{O} (h)$ and the
	smallest $j_2$ like $\mathcal{O} \bigl( \sqrt{h} \, \bigr)$. For large $h$, the diameter of the
	$\Omega^h_{\hat{\jmath}}$ sets will go uniformly like $\mathcal{O} \bigl( \tfrac{1}{\sqrt{h}}\bigr)$, so we can fit
	them in balls of equal radius which tends to zero as $h \rightarrow \infty$. We realize that what is left to check
	is \eqref{eq:AUDequation}.

	For covering $\Omega^h_{\hat{\jmath}}$ sets, let $\hat{\jmath}$ be chosen such that
	$(\alpha^h)^{-1} (Y^{\hat{\jmath}})$ does not intersect $\partial \Omega$, i.e.,
	\begin{equation*}
		\bigl( a_{1; \hat{\jmath}}^h, b_{1; \hat{\jmath}}^h \bigr) \times
			\bigl(  a_{2; \hat{\jmath}}^h, b_{2; \hat{\jmath}}^h \bigr) \subset \Omega .
	\end{equation*}
	For such $\hat{\jmath}$'s we have the Lebesgue measures
	\begin{align*}
		\lambda \bigl( \Omega^h_{\hat{\jmath}} \bigr)			&= \frac{1}{h\sqrt{h}}
			\Bigl( \sqrt{j_2 + 1} - \sqrt{j_2} \, \Bigr)
		\intertext{and}
		\lambda \bigl( \Omega^h_{\hat{\jmath},\hat{k}} \bigr)	&= \frac{1}{h\sqrt{h}} \frac{1}{n}
			\Biggl( \sqrt{j_2 + \frac{k_2 + 1}{n}} - \sqrt{j_2 + \frac{k_2}{n}} \, \Biggr) .
	\end{align*}
	Thus,
	\begin{align*}
		\frac{\lambda \bigl( \Omega^h_{\hat{\jmath},\hat{k}} \bigr)}{\lambda \bigl( \Omega^h_{\hat{\jmath}} \bigr)}
			&= \frac{1}{n} \frac{\sqrt{j_2 + \dfrac{k_2 + 1}{n}}
				- \sqrt{j_2 + \dfrac{k_2}{n}}}{\sqrt{j_2 + 1} - \sqrt{j_2}} \\
			&= \frac{1}{n^2} \frac{\sqrt{j_2 + 1} + \sqrt{j_2}}{\sqrt{j_2 +
				\dfrac{k_2 + 1}{n}} + \sqrt{j_2 + \dfrac{k_2}{n}}} .
	\end{align*}
	Since the smallest $j_2$ goes like $\mathcal{O} \bigl( \sqrt{h} \, \bigr)$, we must uniformly have that
	\begin{equation*}
		\frac{\lambda \bigl( \Omega^h_{\hat{\jmath},\hat{k}} \bigr)}{\lambda \bigl( \Omega^h_{\hat{\jmath}} \bigr)}
			\rightarrow \frac{1}{n^2} = \lambda ( Y_{\hat{k}} ) ,
	\end{equation*}
	for any $\hat{k} \in \{ 0,\ldots,n-1 \}^2$, as $h \rightarrow \infty$. This implies
	\begin{equation*}
		\left| \frac{\lambda(\Omega^h_{j,k})}{\lambda(\Omega^h_j)} - \lambda(Y_k) \right| < \epsilon ,
	\end{equation*}
	where $\epsilon = \epsilon (h) \rightarrow 0$ as $h \rightarrow \infty$.
\end{proof}
In virtue of Proposition~\ref{prop:lambdastronglytwoscale}, we thus get
Proposition~\ref{prop:lambdastronglytwoscaleex}.
\begin{prop}
	\label{prop:lambdastronglytwoscaleex}
	The sequence $\{ \tau^h \}$ defined by
	\begin{equation*}
		\tau^h v = v \bigl( \, \boldsymbol{\cdot} \,, \alpha^h (\, \boldsymbol{\cdot} \,) \bigr) ,
	\end{equation*}
	where $\{ \alpha^h \}$ is given by \eqref{eq:alphahexample}, is strongly two-scale compatible with respect to
	$L^2 \bigl( \Omega; \, C_{\mathrm{per}} (Y) \bigr)$.
\end{prop}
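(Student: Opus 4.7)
The plan is to invoke Proposition~\ref{prop:lambdastronglytwoscale} directly, since the statement of Proposition~\ref{prop:lambdastronglytwoscaleex} is precisely the conclusion of that proposition specialized to the $\{\alpha^h\}$ at hand. Proposition~\ref{prop:lambdastronglytwoscale} carries two hypotheses: that $\Omega$ has Lipschitz continuous boundary, and that $\{\alpha^h\}$ is asymptotically uniformly distributed on $\Omega$.

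The first hypothesis is immediate from the very definition of $\Omega$: since $\Omega = (a_1,b_1) \times (a_2,b_2)$ is an open rectangle in $\mathbb{R}^2$, its boundary is a union of four line segments and is trivially Lipschitz. The second hypothesis is exactly the content of the just-proved Proposition~\ref{prop:AUDexample}, which verifies the asymptotic uniform distribution condition \eqref{eq:AUDequation} for the specific $\{\alpha^h\}$ given by \eqref{eq:alphahexample}.

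With both hypotheses in hand, an application of Proposition~\ref{prop:lambdastronglytwoscale} yields the desired strong two-scale compatibility of $\{\tau^h\}$ with respect to $L^2\bigl(\Omega;\,C_{\mathrm{per}}(Y)\bigr)$. There is no genuine obstacle to overcome here; the result is essentially a corollary, recorded separately because it fixes the admissible subspace of $L^2(\Omega \times Y)$ that is needed to instantiate the abstract machinery of Proposition~\ref{prop:lambdatwoscalegrad} and Theorem~\ref{theo:lambdahomo} for the forthcoming two-dimensional example.
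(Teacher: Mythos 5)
Your proof is correct and follows exactly the paper's own argument: combine Proposition~\ref{prop:AUDexample} with Proposition~\ref{prop:lambdastronglytwoscale}, the Lipschitz boundary condition being trivially satisfied since $\Omega$ is an open rectangle. The extra sentence verifying the boundary hypothesis is a harmless elaboration the paper leaves implicit.
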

\begin{proof}
	Use Proposition~\ref{prop:AUDexample} together with Proposition~\ref{prop:lambdastronglytwoscale}.
\end{proof}
Define the $2$-tuple $\zeta$ according to
\begin{equation}
	\label{eq:diagonalmatrix}
	\left\{
		\begin{aligned}
			\zeta_1(x) &= 1 \\
			\zeta_2(x) &= 2x_2
		\end{aligned}
	\right. ,
\end{equation}
and define the diagonal matrix $\Pi$ element-wise by
\begin{equation*}
	\bigl( \pi_{ii}(x)\bigr)_{i=1,2} = \bigl( \zeta_i(x) \bigr)_{i=1,2} .
\end{equation*}
Proposition~\ref{prop:typeHex} below shows that $\{ \alpha^h \}$ is of right type to use in
Theorem~\ref{theo:lambdahomo} for the homogenization to work out properly.
\begin{prop}
	\label{prop:typeHex}
	The sequence $\{ \alpha^h \}$ given by \eqref{eq:alphahexample} is of type
	$\mathrm{H}_{L^2( \Omega; \, C_{\mathrm{per}} (Y))}^\zeta$, where $\zeta$ is given by \eqref{eq:diagonalmatrix}.
\end{prop}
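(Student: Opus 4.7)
The plan is to verify the three conditions~(i)--(iii) of Definition~\ref{def:typeH} in turn, for $X = L^2\bigl(\Omega;\,C_{\mathrm{per}}(Y)\bigr)$. Condition~(i) is immediate: it is exactly the content of the just-proved Proposition~\ref{prop:lambdastronglytwoscaleex}.

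For condition~(ii) I would compute the Jacobian of $\alpha^h$ directly. Since $\Omega$ lies in the open first quadrant, $x_2|x_2| = x_2^2$ on $\Omega$ and the Jacobian is the diagonal matrix $\mathrm{diag}(h,\,2hx_2)$. Taking the constant sequence $p^h(x) := 1/h$, one has $\nabla p^h = 0$ and $\|p^h\|_{L^2(\Omega)} = |\Omega|^{1/2}/h \to 0$, hence $p^h \to 0$ strongly in $H^1(\Omega)$; moreover $p^h \bigl(\partial\alpha^h_j/\partial x_i\bigr)_{i,j}$ equals $\mathrm{diag}(1,\,2x_2)$ identically in $h$, matching $\Pi$ with $\zeta$ as in~\eqref{eq:diagonalmatrix}. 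Note that $\zeta_2 = 2x_2$ is bounded on the bounded set $\Omega$, so $\zeta \in L^\infty(\Omega)^2$, as required.

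The main work is condition~(iii), whose essential ingredient is a chain-rule identity: applying $\partial/\partial x_i$ to $v_i\bigl(\,\boldsymbol{\cdot}\,,\alpha^h(\,\boldsymbol{\cdot}\,)\bigr)$ and summing over $i$ gives
\begin{equation*}
\sum_{i,j=1}^{2}\frac{\partial v_i}{\partial y_j}\bigl(x,\alpha^h(x)\bigr)\frac{\partial \alpha^h_j}{\partial x_i}(x) = \sum_{i=1}^{2}\frac{\partial}{\partial x_i}\Bigl[v_i\bigl(\,\boldsymbol{\cdot}\,,\alpha^h(\,\boldsymbol{\cdot}\,)\bigr)\Bigr](x) - \bigl(\mathrm{div}_x v\bigr)\bigl(x,\alpha^h(x)\bigr),
\end{equation*}
for every $v \in \mathcal{D}\bigl(\Omega;\,C_{\mathrm{per}}^\infty(Y)^2\bigr)$; here the diagonal form of the Jacobian is what lets the two sums collapse exactly in this way. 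Testing this expression against an arbitrary $\phi \in C_c^\infty(\Omega)$, integrating the first (divergence) term by parts, and passing termwise to the limit via Proposition~\ref{prop:lambdastronglytwoscaleex} reduces the limit to $-\int_\Omega \bar v \cdot \nabla \phi\,\mathrm{d}x - \int_\Omega (\mathrm{div}_x \bar v)\phi\,\mathrm{d}x$ with $\bar v(x) := \int_Y v(x,y)\,\mathrm{d}y$, and this vanishes by a second integration by parts. Hence the weak convergence in~(iii) holds for every smooth $v$, and $Z$ may be chosen as any Banach subspace of $X^2$ with the required density property; for coherence with the local problem~\eqref{eq:locprob} I would take $Z$ whose orthogonal complement in $L^2\bigl(\Omega;\,L^2_{\mathrm{per}}(Y)^2\bigr)$ is the closure of the fields $\bigl(\phi(x)\zeta_i(x)\partial_{y_i}\psi(y)\bigr)_{i=1,2}$, $\phi \in L^2(\Omega)$, $\psi \in W_{\mathrm{per}}(Y)$, with the density of smooth fields established by mollification. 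The main obstacle I anticipate is not the chain-rule computation, which is essentially mechanical, but rather making this choice of $Z$ carefully, so that $Z^\perp$ is large enough to accommodate the corrector $w_1$ appearing in Theorem~\ref{theo:lambdahomo} yet $Z$ itself still admits a dense subspace of smooth test fields.
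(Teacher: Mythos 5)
Your treatment of conditions (i) and (ii) is correct and coincides with the paper's: (i) is Proposition~\ref{prop:lambdastronglytwoscaleex}, and for (ii) the choice $p^h = 1/h$ with the diagonal Jacobian $\mathrm{diag}(h, 2hx_2)$ gives $\Pi$ exactly. The problem is condition (iii). Your chain-rule computation does show that $\sum_{i,j}\frac{\partial v_i}{\partial y_j}\bigl(\,\boldsymbol{\cdot}\,,\alpha^h(\,\boldsymbol{\cdot}\,)\bigr)\frac{\partial\alpha^h_j}{\partial x_i} \rightarrow 0$ \emph{in the sense of distributions} for every smooth $v$, but Definition~\ref{def:typeH}(iii) demands weak convergence in $L^2(\Omega)$, and a weakly convergent sequence in $L^2(\Omega)$ must be norm-bounded. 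Here the sequence is not bounded in general: since the Jacobian is $h\,\mathrm{diag}(\zeta_1,\zeta_2)$, your own identity reduces the expression to $h\sum_{i}\zeta_i\,\frac{\partial v_i}{\partial y_i}\bigl(\,\boldsymbol{\cdot}\,,\alpha^h(\,\boldsymbol{\cdot}\,)\bigr)$, whose $L^2(\Omega)$ norm grows like $h$ whenever $\sum_i\zeta_i\partial v_i/\partial y_i \not\equiv 0$ (take $v_1(x,y)=\psi(x)\sin 2\pi y_1$, $v_2=0$: the expression is $2\pi h\,\psi(x)\cos 2\pi hx_1$, which converges to $0$ distributionally but is unbounded in $L^2(\Omega)$ and hence not weakly convergent there). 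So your claim that ``the weak convergence in (iii) holds for every smooth $v$'' is false, and (iii) cannot be satisfied on all of $\mathcal{D}\bigl(\Omega;\,C^{\infty}_{\mathrm{per}}(Y)^2\bigr)$.

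This is exactly why the space $Z$ must be chosen as it is in the paper: one takes $Z$ to be the kernel of the weighted divergence, $Z = \bigl\{ v : \sum_{i}\zeta_i\,\partial v_i/\partial y_i = 0 \bigr\}$, so that for $v \in Z\cap\mathcal{D}\bigl(\Omega;\,C^{\infty}_{\mathrm{per}}(Y)^2\bigr)$ the offending expression vanishes identically and the weak convergence in (iii) is trivial; the only nontrivial remaining point is the density of $Z\cap\mathcal{D}\bigl(\Omega;\,C^{\infty}_{\mathrm{per}}(Y)^2\bigr)$ in $Z$. Your alternative definition of $Z$ via the orthogonal complement of the weighted gradient fields does in fact coincide with this kernel (that is essentially the content of Lemma~\ref{lem:orthcompl} and Proposition~\ref{prop:orthocomplex}), but you never invoke that coincidence, and your stated reason for (iii) holding does not use the structure of $Z$ at all --- which is precisely where the argument breaks.
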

\begin{proof}
	We must check conditions (i)--(iii) of Definition~\ref{def:typeH}. Condition (i) is an immediate consequence of
	Proposition~\ref{prop:lambdastronglytwoscaleex}. For condition (ii), we note that
	\begin{equation*}
		\left\{
			\begin{aligned}
				\frac{\partial \alpha^h_1}{\partial x_1}(x) &= h \\
				\frac{\partial \alpha^h_2}{\partial x_2}(x) &= 2 h x_2 \\
				\frac{\partial \alpha^h_j}{\partial x_i}(x) &= 0, \quad i \neq j
			\end{aligned}
		\right. ,
	\end{equation*}
	and by choosing
	\begin{equation*}
		p^h (x) = \frac{1}{h} ,
	\end{equation*}
	for which $p^h \rightarrow 0$ in $H^1(\Omega)$, we get
	\begin{equation*}
		p^h \, \Bigl( \frac{\partial \alpha^h_j}{\partial x_i} \Bigr)_{i,j=1,2} = \bigl( \pi_{ij} \big)_{i,j=1,2}
			\rightarrow \bigl( \pi_{ij} \big)_{i,j=1,2} \quad \textrm{in } L^2(\Omega).
	\end{equation*}
	We obviously have that $\zeta \in L^{\infty} (\Omega)^2$. Finally, we must check condition (iii) of
	Definition~\ref{def:typeH}. In this context, it means that we must find a Banach space
	$Z \subset L^2 \bigl( \Omega; \, C_{\mathrm{per}} (Y)^2 \bigr)$ for which
	$Z \cap \mathcal{D} \bigl( \Omega; \, C_{\mathrm{per}}^{\infty} (Y)^2 \bigr)$ is dense in $Z$, and such that, for any
	$v \in Z \cap \mathcal{D} \bigl( \Omega; \, C_{\mathrm{per}}^{\infty} (Y)^2 \bigr)$,
	\begin{equation}
		\label{eq:weakconvzeta}
		h \sum_{i=1}^2 \, \frac{\partial v_i}{\partial y_i}\bigl( \, \boldsymbol{\cdot} \, ,
			\alpha^h(\, \boldsymbol{\cdot} \,) \bigr) \, \zeta_i \rightharpoonup 0 \quad \textrm{in } L^2 (\Omega),
	\end{equation}
	as $h \rightarrow \infty$. If we define
	\begin{equation}
		\label{eq:Zspace}
		Z = \Bigl\{ v \in L^2 \bigl( \Omega; \, L_{\mathrm{per}}^2 (Y)^2 \bigr) \, : \, \sum_{i=1}^2 \,
			\frac{\partial v_i}{\partial y_i} \, \zeta_i = 0 \Bigr\},
	\end{equation}
	we clearly have, for any $v \in Z \cap \mathcal{D} \bigl( \Omega; \, C_{\mathrm{per}}^{\infty} (Y)^2 \bigr)$, a
	satisfied weak convergence \eqref{eq:weakconvzeta}. What is left to prove is the density of
	$Z \cap \mathcal{D} \bigl( \Omega; \, C_{\mathrm{per}}^{\infty} (Y)^2 \bigr)$ in $Z$, where $Z$ is given by
	\eqref{eq:Zspace}. We apparently have
	\begin{equation}
		\label{eq:ZDspace}
		Z \cap \mathcal{D} \bigl( \Omega; \, C_{\mathrm{per}}^{\infty} (Y)^2 \bigr)
			= \Bigl\{ v \in \mathcal{D} \bigl( \Omega; \, C_{\mathrm{per}}^{\infty} (Y)^2 \bigr) \,
				: \, \sum_{i=1}^2 \, \frac{\partial v_i}{\partial y_i} \, \zeta_i = 0 \Bigr\},
	\end{equation}
	and since $\mathcal{D} \bigl( \Omega; \, C_{\mathrm{per}}^{\infty} (Y)^2 \bigr)$ is dense in
	$L^2 \bigl( \Omega; \, L_{\mathrm{per}}^2 (Y)^2 \bigr)$, it is easy to check that we have the desired density of
	$Z \cap \mathcal{D} \bigl( \Omega; \, C_{\mathrm{per}}^{\infty} (Y)^2 \bigr)$ in $Z$.
\end{proof}
\begin{rema}
	We have silently used the fact that the vanishing divergence in the definition for $Z$ and the expression for
	$Z \cap \mathcal{D} \bigl( \Omega; \, C_{\mathrm{per}}^{\infty} (Y)^2 \bigr)$ in \eqref{eq:Zspace} and
	\eqref{eq:ZDspace}, respectively, does not upset the inheritance of the density property for
	$\mathcal{D} \bigl( \Omega; \, C_{\mathrm{per}}^{\infty} (Y)^2 \bigr)$ in
	$L^2 \bigl( \Omega; \, L_{\mathrm{per}}^2 (Y)^2 \bigr)$.
\end{rema}
To obtain the orthogonal complement, we need Lemma~\ref{lem:orthcompl}~\cite{Ngu89}.
\begin{lemma}
	\label{lem:orthcompl}
	Assume that $Y$ is a unit cube in $\mathbb{R}^N$. Let $f \in L^2_{\mathrm{per}} (Y)^N$ be orthogonal to the space
	\begin{equation*}
		\Bigl\{ g \in C_{\mathrm{per}}^{\infty} (Y)^N \, :
			\, \sum_{i=1}^N \, \frac{\partial g_i}{\partial y_i} = 0 \Bigr\}
	\end{equation*}
	of divergence free functions. Then, for some $h \in W_{\mathrm{per}} (Y)$,
	\begin{equation*}
		\bigl( f_i \bigr)_{i=1,\ldots,N} = \Bigl( \frac{\partial h}{\partial y_i} \Bigr)_{i=1,\ldots,N}.
	\end{equation*}
\end{lemma}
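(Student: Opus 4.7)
My plan is to prove the lemma by Fourier analysis on the torus, which converts both the orthogonality hypothesis and the conclusion into simple pointwise conditions on Fourier coefficients. Write
\begin{equation*}
	f_i (y) = \sum_{k \in \mathbb{Z}^N} \hat{f}_i (k) \, e^{2\pi \mathrm{i} k \cdot y}, \qquad i = 1,\ldots,N,
\end{equation*}
with $\sum_k | \hat{f} (k) |^2 < \infty$ since $f \in L^2_{\mathrm{per}}(Y)^N$. The goal is to exhibit explicit smooth periodic divergence free test functions that, fed into the orthogonality hypothesis, pin down each $\hat{f}(k) \in \mathbb{C}^N$.

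The first step is to test against the modes $g(y) = \eta \, e^{2\pi \mathrm{i} k \cdot y}$ for a fixed $k \in \mathbb{Z}^N$ and a fixed vector $\eta \in \mathbb{C}^N$ with $k \cdot \eta = 0$. Since $\sum_i \partial g_i / \partial y_i = 2\pi \mathrm{i} (k \cdot \eta) \, e^{2\pi \mathrm{i} k \cdot y} = 0$, these lie in the test space (after splitting into real and imaginary parts to stay in $C_{\mathrm{per}}^{\infty}(Y)^N$), and Parseval turns the hypothesis $\int_Y f \cdot \overline{g} \, \mathrm{d} y = 0$ into $\hat{f}(k) \cdot \overline{\eta} = 0$. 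Letting $\eta$ range over the hyperplane $\{ k \}^{\perp}$ forces $\hat{f}(k)$ to be parallel to $k$ whenever $k \neq 0$, so I can write $\hat{f}(k) = c(k) \, k$ for some scalar $c(k)$. For $k = 0$ the constraint $k \cdot \eta = 0$ is vacuous, so the orthogonality with every constant $\eta$ gives $\hat{f}(0) = 0$; in other words, $f$ has mean value zero.

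With these Fourier side conditions in hand, define
\begin{equation*}
	\hat{h} (k) = \frac{c(k)}{2 \pi \mathrm{i}} \; \text{for } k \neq 0, \qquad \hat{h} (0) = 0,
\end{equation*}
and set $h(y) = \sum_k \hat{h}(k) \, e^{2\pi \mathrm{i} k \cdot y}$. Then, formally, $\partial h / \partial y_i = \sum_k (2\pi\mathrm{i} k_i) \hat{h}(k) \, e^{2\pi \mathrm{i} k \cdot y} = \sum_k k_i c(k) e^{2\pi \mathrm{i} k \cdot y} = f_i$, which is the desired identity. To make this rigorous I will note that
\begin{equation*}
	\sum_{k \neq 0} | k |^2 \, | \hat{h} (k) |^2 = \frac{1}{4\pi^2} \sum_{k \neq 0} | c(k) |^2 \, | k |^2 = \frac{1}{4\pi^2} \sum_{k \neq 0} | \hat{f} (k) |^2 < \infty,
\end{equation*}
so $h$ lies in $H^1_{\mathrm{per}}(Y)$; combined with $\hat{h}(0) = 0$ this gives $h \in W_{\mathrm{per}}(Y)$, and the equality of Fourier coefficients yields $\nabla h = f$ in $L^2_{\mathrm{per}}(Y)^N$.

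The only real subtlety is the first step, where the test functions that appear in the hypothesis are required to be real valued and smooth; this is handled by replacing $\eta \, e^{2\pi \mathrm{i} k \cdot y}$ by its real and imaginary parts (and repeating with $-k$ paired with $k$) to extract the condition $\hat{f}(k) \cdot \overline{\eta} = 0$ for every $\eta \perp k$. Once that bookkeeping is done, everything reduces to the algebraic fact that a vector perpendicular to the hyperplane $\{ k \}^{\perp}$ must be a scalar multiple of $k$, and the construction of $h$ is automatic from Parseval.
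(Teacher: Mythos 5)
Your proof is correct. There is, however, no in-paper argument to compare it against: the paper states Lemma~\ref{lem:orthcompl} as a quoted result from Nguetseng \cite{Ngu89} and uses it as a black box. Your Fourier-series argument is the standard self-contained proof of this classical fact, and it is complete: testing against the divergence-free modes $\eta\, e^{2\pi\mathrm{i}k\cdot y}$ with $\eta\perp k$ forces $\hat f(k)$ to be parallel to $k$, testing against constants kills $\hat f(0)$, and the potential $h$ is read off coefficientwise, with $\sum_{k\neq 0}|k|^2|\hat h(k)|^2=(2\pi)^{-2}\sum_{k\neq 0}|\hat f(k)|^2$ together with $|\hat h(k)|\leqslant(2\pi)^{-1}|\hat f(k)|$ for $k\neq 0$ placing $h$ in $H^1_{\mathrm{per}}(Y)$ with vanishing mean, i.e.\ in $W_{\mathrm{per}}(Y)$ as the paper defines it. The two points that genuinely require care, namely that the admissible test functions must be the real and imaginary parts of the complex exponentials and that the modes $k$ and $-k$ must be paired to recover the Hermitian condition $\hat f(k)\cdot\overline{\eta}=0$, are both flagged and correctly handled in your final paragraph. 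The main alternative route in the literature is distributional (orthogonality to all divergence-free fields gives $f=\nabla p$ for a distribution $p$ by a de Rham-type argument, followed by a regularity step); on the periodic cube your Fourier computation is more elementary and delivers the mean-zero normalization of $h$ for free, so it is a perfectly good substitute for the citation.
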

We are now ready to characterize the orthogonal complement.
\begin{prop}
	\label{prop:orthocomplex}
	The orthogonal complement $Z^\perp \subset L^2 \bigl( \Omega; \, L_{\mathrm{per}}^2 (Y)^2 \bigr)$ of $Z$, defined in
	the proof of Proposition~\ref{prop:typeHex}, is
	\begin{equation*}
		Z^\perp = \Bigl\{ \Bigl( \zeta_i \frac{\partial u_1}{\partial y_i} \Bigr)_{i=1,2} \, :
			\, u_1 \in L^2 \bigl( \Omega; \, W_{\mathrm{per}} (Y) \bigr) \Bigr\} .
	\end{equation*}
\end{prop}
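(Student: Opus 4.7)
The plan is to prove both inclusions by reducing the weighted-divergence orthogonality defining $Z$ to the plain divergence-free case already handled by Lemma~\ref{lem:orthcompl}. The reduction is effected by absorbing the factor $\zeta_i(x)$ into either the potential (for $\supset$) or the test functions (for $\subset$), which is legitimate because on $\Omega$ one has $\zeta_1 \equiv 1$ and $\zeta_2(x)=2x_2 \geq 2a_2 > 0$, so $\zeta_i$ is smooth and uniformly bounded above and below.

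For the easy inclusion $\supset$, I would take $w_i = \zeta_i\,\partial u_1/\partial y_i$ with $u_1 \in L^2\bigl(\Omega;\,W_{\mathrm{per}}(Y)\bigr)$ and any $v \in Z \cap \mathcal{D}\bigl(\Omega;\,C_{\mathrm{per}}^\infty(Y)^2\bigr)$. Since $\zeta_i$ is $y$-independent, it can be pulled inside the $y$-integral, and integration by parts in $y$ (the $Y$-periodic boundary terms vanish) yields $-\int_\Omega\int_Y u_1 \sum_i \zeta_i\,\partial v_i/\partial y_i\,\mathrm{d}y\,\mathrm{d}x = 0$ by the defining condition of $Z$. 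The density of $Z \cap \mathcal{D}\bigl(\Omega;\,C_{\mathrm{per}}^\infty(Y)^2\bigr)$ in $Z$ (established in the preceding proof) then extends the orthogonality to all of $Z$.

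For the inclusion $\subset$, let $f \in Z^\perp$. The key construction is the following family of product test functions in $Z$: for any $\phi \in \mathcal{D}(\Omega)$ and any smooth $Y$-periodic $y$-divergence-free field $\psi=(\psi_1,\psi_2)$, set $v_i(x,y) = \phi(x)\psi_i(y)/\zeta_i(x)$. Because $\phi/\zeta_i \in \mathcal{D}(\Omega)$, this $v$ lies in $\mathcal{D}\bigl(\Omega;\,C_{\mathrm{per}}^\infty(Y)^2\bigr)$; the identity $\sum_i \zeta_i\,\partial v_i/\partial y_i = \phi\sum_i \partial\psi_i/\partial y_i = 0$ places $v$ in $Z$. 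The orthogonality $\langle f, v\rangle = 0$ together with the arbitrariness of $\phi$ shows that for a.e.\ $x$ the function $y \mapsto \bigl(f_i(x,y)/\zeta_i(x)\bigr)_i$ is orthogonal in $L^2_{\mathrm{per}}(Y)^2$ to $\psi$; a countable-dense-family argument (using separability of the smooth divergence-free subspace) consolidates the exceptional null sets into one independent of $\psi$. Lemma~\ref{lem:orthcompl} applied slicewise then produces, for a.e.\ $x$, a unique zero-mean $u_1(x,\cdot) \in W_{\mathrm{per}}(Y)$ with $f_i(x,\cdot)/\zeta_i(x) = \partial u_1/\partial y_i(x,\cdot)$, i.e.\ $f_i = \zeta_i\,\partial u_1/\partial y_i$.

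It remains to verify that $u_1 \in L^2\bigl(\Omega;\,W_{\mathrm{per}}(Y)\bigr)$. The Poincar\'e--Wirtinger inequality on $W_{\mathrm{per}}(Y)$ combined with the uniform lower bound $\zeta_i \geq \min(1,2a_2) > 0$ gives $\|u_1(x,\cdot)\|_{W_{\mathrm{per}}(Y)} \leq C\,\|f(x,\cdot)\|_{L^2(Y)^2}$, delivering the $L^2$-integrability. Measurability in $x$ follows from the observation that $u_1(x,\cdot)$ is the image of the measurable map $x \mapsto f(x,\cdot)/\zeta(x) \in L^2_{\mathrm{per}}(Y)^2$ under the bounded linear operator that reconstructs the zero-mean potential from the gradient-projected field. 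The principal obstacle is precisely the weighted character of the divergence defining $Z$ combined with its $x$-dependence; both are bypassed cleanly by the substitution $v_i = \phi\psi_i/\zeta_i$ that converts the weighted condition into the ordinary divergence-free one covered by Lemma~\ref{lem:orthcompl}, with the slicewise measurability being only a minor additional technicality.
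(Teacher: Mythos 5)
Your argument is correct and follows essentially the same route as the paper: both reduce the weighted orthogonality to Lemma~\ref{lem:orthcompl} by absorbing the $x$-dependent factor $\zeta_i$ (smooth and bounded away from zero on $\Omega$) and then recover $u_1 \in L^2\bigl(\Omega;\,W_{\mathrm{per}}(Y)\bigr)$ from the uniform lower bound on $\zeta$. You are in fact somewhat more thorough than the paper's own proof, which only spells out the inclusion $Z^\perp \subset \bigl\{ (\zeta_i\,\partial u_1/\partial y_i)_{i} \bigr\}$ and passes over the explicit choice of test functions, the consolidation of the $x$-null sets, and the measurability of $x \mapsto u_1(x,\,\boldsymbol{\cdot}\,)$, all of which you address.
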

\begin{proof}
	Suppose $v \in Z$ and $w_1 \in Z^\perp$. Then, by definition,
	\begin{align*}
		0	&= \int\limits_{\Omega} \int\limits_Y \sum_{i=1}^2 \,
				v_i(x,y) w_{1,i}(x,y) \, \mathrm{d} y \, \mathrm{d} x \\
			&= \int\limits_{\Omega} \int\limits_Y \sum_{i=1}^2 \,
				v_i(x,y) \zeta_i(x) \frac{w_{1,i}(x,y)}{\zeta_i(x)} \, \mathrm{d} y \, \mathrm{d} x.
	\end{align*}
	Since
	\begin{equation*}
		\Bigl( \frac{w_{1,i} (x,\, \boldsymbol{\cdot} \,)}{\zeta_i(x)} \Bigr)_{i=1,2}
			\in L_{\mathrm{per}}^2 (Y)^2 \quad \textrm{a.e. } x \in \Omega ,
	\end{equation*}
	Lemma~\ref{lem:orthcompl} implies that, for some
	$u_1 (x,\, \boldsymbol{\cdot} \,) \in W_{\mathrm{per}} (Y)$ a.e.~$x \in \Omega$,
	\begin{equation*}
		\Bigl( \frac{w_{1,i}(x,\, \boldsymbol{\cdot} \,)}{\zeta_i(x)} \Bigr)_{i=1,2}
		= \Bigl( \frac{\partial u_1}{\partial y_i}(x,\, \boldsymbol{\cdot} \,) \Bigr)_{i=1,2}
			\quad \textrm{a.e. } x \in \Omega.
	\end{equation*}
	Hence, for some $u_1 (x,\, \boldsymbol{\cdot} \,) \in W_{\mathrm{per}} (Y)$ a.e.~$x \in \Omega$,
	\begin{equation*}
		\bigl( w_{1,i}(x,\, \boldsymbol{\cdot} \,) \bigr)_{i=1,2}
		= \Bigl( \zeta_i(x) \frac{\partial u_1}{\partial y_i}(x,\, \boldsymbol{\cdot} \,) \Bigr)_{i=1,2}
			\quad \textrm{a.e. } x \in \Omega.
	\end{equation*}
	We know that $Z^\perp \subset L^2 \bigl( \Omega; \, L_{\mathrm{per}}^2 (Y)^2 \bigr)$, so
	$w_1\in L^2 \bigl( \Omega; \, L_{\mathrm{per}}^2 (Y)^2 \bigr)$ holds. This implies
	\begin{align*}
		\lVert u_1 \rVert_{L^2 ( \Omega; \, W_{\mathrm{per}} (Y) )}
					&= \lVert \nabla_y u_1 \rVert_{L^2 ( \Omega; \, L_{\mathrm{per}}^2 (Y)^2 )} \\
					&\leqslant \max \bigl\{ 1, \tfrac{1}{2a_2} \bigr\}
						\lVert w_1 \rVert_{L^2 ( \Omega; \, L_{\mathrm{per}}^2 (Y)^2 )} \\
					&< \infty,
	\end{align*}
	where we have recalled that $\Omega = (a_1,b_1) \times (a_2,b_2)$. Thus,
	$u_1 \in L^2 \bigl( \Omega; \, W_{\mathrm{per}} (Y) \bigr)$, and we conclude that
	\begin{equation*}
		Z^\perp = \Bigl\{ \Bigl( \zeta_i \frac{\partial u_1}{\partial y_i} \Bigr)_{i=1,2} \, :
			\, u_1 \in L^2 \bigl( \Omega; \, W_{\mathrm{per}} (Y) \bigr) \Bigr\},
	\end{equation*}
	and we are done.
\end{proof}
We can now formulate a preliminary homogenization result in Proposition~\ref{prop:lambdahomoex}.
\begin{prop}
	\label{prop:lambdahomoex}
	Assume
	\begin{equation*}
		A \in C_{\mathrm{per}} (Y)^{2 \times 2} \cap \mathcal{M} \bigl( r,s; \, \mathbb{R}^2 \bigr)
	\end{equation*}
	and let $\{ \alpha^h \}$ and $\zeta$ be given by \eqref{eq:alphahexample} and \eqref{eq:diagonalmatrix},
	respectively.
	Then $\{ A \circ \alpha^h\}$ H-converges to $B$ given by
	\begin{equation}
		\label{eq:flowhomoex}
		\Bigl( \sum_{j=1}^2 \, b_{ij} \frac{\partial u}{\partial x_j} \Bigr)_{i=1,2}
		= \Biggl( \int\limits_Y \sum_{j=1}^2 \, a_{ij}(y) \Bigl( \frac{\partial u}{\partial x_j}
			+ \zeta_j \frac{\partial u_1}{\partial y_j}(\, \boldsymbol{\cdot} \, ,y) \Bigr)
				\, \mathrm{d} y \Biggr)_{i=1,2} ,
	\end{equation}
	$u \in H^1_0 (\Omega)$ being the weak limit of the sequence $\{ u^h \}$ of solutions to \eqref{eq:PDEseq},
	if $u \in H^1_0(\Omega)$ and $u_1 \in L^2 \bigl( \Omega; \, W_{\mathrm{per}} (Y) \bigr)$
	uniquely solve the homogenized problem
	\begin{multline*}
		\int\limits_{\Omega} \int\limits_Y \sum_{i=1}^2 \sum_{j=1}^2 \, a_{ij}(y)
			\biggl( \frac{\partial u}{\partial x_j}(x) + \zeta_j(x) \frac{\partial u_1}{\partial y_j}(x,y) \biggr)
				\frac{\partial v}{\partial x_i}(x) \, \mathrm{d} y \, \mathrm{d} x \\
					= \int\limits_{\Omega} f(x) v(x) \, \mathrm{d} x
	\end{multline*}
	for all $v \in H^1_0 (\Omega)$, and, for each $x \in \Omega$, the local problem
	\begin{equation}
		\label{eq:localprob}
		\int\limits_Y \sum_{i=1}^2 \sum_{j=1}^2 \, a_{ij}(y) \biggl( \frac{\partial u}{\partial x_j}(x)
			+ \zeta_j(x) \frac{\partial u_1}{\partial y_j}(x,y) \biggr)
				\zeta_i(x) \frac{\partial v}{\partial y_i}(y) \, \mathrm{d} y = 0
	\end{equation}
	for all $v \in W_{\mathrm{per}} (Y)$.
\end{prop}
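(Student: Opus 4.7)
The plan is to reduce everything to an application of Theorem~\ref{theo:lambdahomo}, with the abstract orthogonal complement then identified via Proposition~\ref{prop:orthocomplex}. No new compactness or convergence analysis is needed; the work is essentially bookkeeping between the general framework and the present concrete choice of $\alpha^h$ and $\zeta$.

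First I would verify the hypotheses of Theorem~\ref{theo:lambdahomo} in the present setting. By assumption, $A \in C_{\mathrm{per}}(Y)^{2 \times 2} \cap \mathcal{M}(r,s;\mathbb{R}^2)$, $\Omega$ has Lipschitz boundary (it is a rectangle), and Proposition~\ref{prop:typeHex} furnishes exactly the statement that $\{\alpha^h\}$ is of type $\mathrm{H}^\zeta_{L^2(\Omega;\,C_{\mathrm{per}}(Y))}$ for the $\zeta$ defined in \eqref{eq:diagonalmatrix}. Theorem~\ref{theo:lambdahomo} then applies and yields H-convergence of $\{A \circ \alpha^h\}$ to a limit $B$, together with the flow representation and the pair of equations \eqref{eq:homoprob}--\eqref{eq:locprob}, where the corrector is a function $w_1 \in Z^\perp \subset L^2\bigl(\Omega;\,L^2_{\mathrm{per}}(Y)^2\bigr)$ determined together with $u$ by those two equations.

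Second I would translate the abstract $w_1 \in Z^\perp$ into the explicit form announced in the statement. By Proposition~\ref{prop:orthocomplex}, every such $w_1$ is of the form $\bigl(\zeta_i \tfrac{\partial u_1}{\partial y_i}\bigr)_{i=1,2}$ for some $u_1 \in L^2\bigl(\Omega;\,W_{\mathrm{per}}(Y)\bigr)$. Plugging this identification into the H-limit formula from Theorem~\ref{theo:lambdahomo} gives exactly \eqref{eq:flowhomoex}; the same substitution inside \eqref{eq:homoprob} gives the homogenized problem written in the statement verbatim. For the local problem, starting from \eqref{eq:locprob}, replacing $w_{1,j}$ by $\zeta_j \tfrac{\partial u_1}{\partial y_j}$ and absorbing the pre-factor $\zeta_i(x)$ into the integrand produces \eqref{eq:localprob}.

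The only point that still requires a short argument is uniqueness of $u_1$ (i.e.\ well-posedness of the pair $(u,u_1)$ in the new variables). For fixed $x \in \Omega$, the local problem \eqref{eq:localprob} is a linear elliptic equation on $W_{\mathrm{per}}(Y)$ with bilinear form $(u_1,v)\mapsto \int_Y \sum_{i,j}\zeta_i(x)\zeta_j(x) a_{ij}(y) \tfrac{\partial u_1}{\partial y_j}\tfrac{\partial v}{\partial y_i}\,\mathrm{d}y$. Since $\zeta_1 \equiv 1$ and $\zeta_2(x)\geqslant 2a_2 > 0$ on $\Omega$, the ellipticity constant of $A$ transfers to this form uniformly in $x$, so Lax--Milgram gives a unique $u_1(x,\cdot)\in W_{\mathrm{per}}(Y)$ for a.e.\ $x$, with an $L^2$ bound in $x$ inherited from $\nabla u\in L^2(\Omega)^2$; this is also what allows the passage from $Z^\perp$-uniqueness in Theorem~\ref{theo:lambdahomo} to uniqueness of $u_1 \in L^2(\Omega;W_{\mathrm{per}}(Y))$. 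I expect no real obstacle: the hard analytic content is already packaged inside Propositions~\ref{prop:typeHex} and~\ref{prop:orthocomplex}, and the remainder is algebraic rewriting plus a standard Lax--Milgram argument made possible by the strict positivity of $\zeta_2$ on the first-quadrant rectangle $\Omega$.
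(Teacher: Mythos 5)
Your proposal is correct and follows essentially the same route as the paper, whose proof simply cites Theorem~\ref{theo:lambdahomo} together with Propositions~\ref{prop:typeHex} and~\ref{prop:orthocomplex} and leaves the substitution of $w_1 = \bigl( \zeta_i \tfrac{\partial u_1}{\partial y_i} \bigr)_{i=1,2}$ implicit. The extra Lax--Milgram discussion of uniqueness of $u_1$ is a reasonable elaboration but not part of the paper's (one-line) argument.
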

\begin{proof}
	This is an immediate consequence of Theorem~\ref{theo:lambdahomo} together with Proposition~\ref{prop:typeHex} and
	Proposition~\ref{prop:orthocomplex}.
\end{proof}
It is possible to improve Proposition~\ref{prop:lambdahomoex} to yield an explicit homogenized matrix and a local
problem of the same type as in Theorem~\ref{theo:elliptichomo}. Indeed, we have Theorem~\ref{theo:lambdahomoex}.
\begin{theo}
	\label{theo:lambdahomoex}
	Assume
	\begin{equation*}
		A \in C_{\mathrm{per}} (Y)^{2 \times 2} \cap \mathcal{M} \bigl( r,s; \, \mathbb{R}^2 \bigr)
	\end{equation*}
	and let $\{ \alpha^h \}$ and $\zeta$ be given by \eqref{eq:alphahexample} and \eqref{eq:diagonalmatrix},
	respectively.
	Then $\{ A \circ \alpha^h\}$ H-converges to $B$ given by
	\begin{equation}
		\label{eq:homogenizedmatrix}
		\bigl( b_{ij} \bigr)_{i,j=1,2} = \Biggl( \int\limits_Y \sum_{k=1}^2 \, a_{ik}(y) \Bigl( \delta_{kj}
			+ \zeta_k \frac{\partial z_j}{\partial y_k}(\, \boldsymbol{\cdot} \, ,y) \Bigr)
				\, \mathrm{d} y \Biggr)_{i=1,2},
	\end{equation}
	where $z \in L^{\infty} \bigl( \Omega; \, W_{\mathrm{per}} (Y)^2 \bigr)$ uniquely solves, for each
	$x \in \Omega$, the local problem
	\begin{equation}
		\label{eq:localproblemex}
		- \Biggl( \sum_{i=1}^2\sum_{k=1}^2 \, \zeta_i(x) \frac{\partial}{\partial y_i} \! \biggl\lgroup \! a_{ik}
			\Bigl( \delta_{kj} + \zeta_k(x) \frac{\partial z_j}{\partial y_k}(x,\, \boldsymbol{\cdot} \,) \Bigr)
				\! \biggr\rgroup \! \Biggr)_{j=1,2} = 0 \quad \textrm{in } Y.
	\end{equation}
\end{theo}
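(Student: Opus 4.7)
The plan is to deduce Theorem~\ref{theo:lambdahomoex} from Proposition~\ref{prop:lambdahomoex} by separating variables in the local problem \eqref{eq:localprob}, in direct analogy with how Theorem~\ref{theo:elliptichomo} refines the abstract two-scale homogenization result for periodic coefficients. The key observation is that, for each fixed $x \in \Omega$, the local problem \eqref{eq:localprob} is linear in the data $\nabla u(x)$ and its coefficients $a_{ij}(y)$ and $\zeta_i(x)$ are uniformly elliptic on $Y$ (with bounds independent of $x$ as long as $x$ stays in $\Omega$).

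First I would propose the ansatz
\begin{equation*}
    u_1(x,y) = \sum_{j=1}^{2} \frac{\partial u}{\partial x_j}(x) \, z_j(x,y),
\end{equation*}
where each $z_j(x, \, \boldsymbol{\cdot} \,) \in W_{\mathrm{per}}(Y)$. Substituting this into \eqref{eq:localprob}, collecting the coefficients of $\partial u/\partial x_j(x)$ and using the arbitrariness of $\nabla u(x)$ (since the homogenized problem can be tested against data $f$ producing any desired $\nabla u$ pointwise), one obtains, for each $j = 1,2$ and each $x \in \Omega$, the cell problem in weak form
\begin{equation*}
    \int\limits_Y \sum_{i=1}^{2}\sum_{k=1}^{2} a_{ik}(y)\Bigl(\delta_{kj} + \zeta_k(x)\tfrac{\partial z_j}{\partial y_k}(x,y)\Bigr)\zeta_i(x)\tfrac{\partial v}{\partial y_i}(y)\,\mathrm{d}y = 0 \quad \forall v \in W_{\mathrm{per}}(Y),
\end{equation*}
which is precisely the weak form of \eqref{eq:localproblemex}. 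Substituting the ansatz into the flow formula \eqref{eq:flowhomoex} of Proposition~\ref{prop:lambdahomoex} and factoring out $\partial u/\partial x_j(x)$ immediately yields the explicit homogenized matrix \eqref{eq:homogenizedmatrix}.

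Next I would verify well-posedness of the local problem parametrized by $x$: for each $x \in \Omega$, the bilinear form
\begin{equation*}
    (z,v) \mapsto \int_Y \sum_{i,k} a_{ik}(y)\, \zeta_k(x) \zeta_i(x) \tfrac{\partial z}{\partial y_k}\tfrac{\partial v}{\partial y_i}\,\mathrm{d}y
\end{equation*}
is coercive on $W_{\mathrm{per}}(Y)$ since $A(y)$ is coercive uniformly in $y$ and $\zeta_1 = 1$, $\zeta_2 = 2x_2 \geqslant 2a_2 > 0$ on $\Omega = (a_1,b_1)\times(a_2,b_2)$ with $a_2 > 0$. Lax--Milgram then produces a unique $z_j(x, \, \boldsymbol{\cdot} \,) \in W_{\mathrm{per}}(Y)$ for each $x$, and the standard energy estimate combined with the fact that $\zeta \in L^\infty(\Omega)^2$ with $\zeta_2$ bounded away from $0$ gives a bound on $\lVert z_j(x, \, \boldsymbol{\cdot} \,) \rVert_{W_{\mathrm{per}}(Y)}$ which is uniform in $x \in \Omega$; hence $z \in L^\infty(\Omega; W_{\mathrm{per}}(Y)^2)$. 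Measurability in $x$ follows from the continuous dependence of the Lax--Milgram solution on the coefficients, here only on $\zeta(x)$.

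The main obstacle, and essentially the only place where something nontrivial happens beyond citing Proposition~\ref{prop:lambdahomoex}, is justifying rigorously the separation-of-variables step: one must argue that the $u_1$ produced abstractly by the proposition indeed admits the affine decomposition in $\nabla u(x)$, rather than merely that \emph{some} such decomposition solves the problem. This is standard and relies on uniqueness of $u_1(x, \, \boldsymbol{\cdot} \,)$ in $W_{\mathrm{per}}(Y)$ up to additive constants for each fixed $\nabla u(x)$, together with linearity of the local problem in this data, so the map $\nabla u(x) \mapsto u_1(x, \, \boldsymbol{\cdot} \,)$ is linear and extracting its action on the basis vectors gives the $z_j$. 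Regularity in the $x$ variable transfers from that of the data, which is exactly the $L^\infty(\Omega; W_{\mathrm{per}}(Y)^2)$ membership claimed.
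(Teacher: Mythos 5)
Your proposal is correct and follows essentially the same route as the paper: the same separation-of-variables ansatz $u_1(x,y)=\sum_j \tfrac{\partial u}{\partial x_j}(x)\,z_j(x,y)$, substituted into the flow formula \eqref{eq:flowhomoex} to read off \eqref{eq:homogenizedmatrix} and into the local problem \eqref{eq:localprob} to obtain the weak form of \eqref{eq:localproblemex}. The only substantive difference is in the well-posedness step: the paper proves uniqueness by rescaling $y_i \mapsto y_i/\zeta_i(x)$, which turns \eqref{eq:localproblemex} into a ``classical'' cell problem of the type in Theorem~\ref{theo:elliptichomo} on the rectangle $(0,1)\times\bigl(0,\tfrac{1}{2x_2}\bigr)$, whereas you apply Lax--Milgram directly to the $\zeta$-weighted bilinear form, using $\zeta_1=1$ and $\zeta_2=2x_2\geqslant 2a_2>0$ for coercivity. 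The two arguments are equivalent in substance, but your direct route has the small advantage of making the uniform-in-$x$ energy estimate explicit, which is exactly what justifies the membership $z\in L^{\infty}\bigl(\Omega;\,W_{\mathrm{per}}(Y)^2\bigr)$ that the paper only motivates via the requirement that $B\in L^{\infty}(\Omega)^{2\times 2}$; your closing remark on recovering the affine structure of $u_1$ from linearity and uniqueness (rather than from any ``arbitrariness of $\nabla u$'') is also the right way to close the gap the paper leaves implicit.
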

$\phantom{This is just to skip one line.}$
\begin{proof}
	The proof will be performed in four steps, where the first step introduces an ansatz, the second and third steps
	derive the homogenized matrix \eqref{eq:homogenizedmatrix} and the local problem \eqref{eq:localproblemex},
	respectively. In the last step we prove the uniqueness of the solution to the local problem. \\

	\emph{Step (i): Ansatz.} Let $u \in H^1_0(\Omega)$ and
	$u_1 \in L^2 \bigl( \Omega; \, W_{\mathrm{per}} (Y) \bigr)$ solve the system of equations in
	Proposition~\ref{prop:lambdahomoex}, and make the ansatz
	\begin{equation}
		\label{eq:ansatz}
		u_1 = \sum_{j=1}^2 \, \frac{\partial u}{\partial x_j} z_j,
	\end{equation}
	where $z \in L^{\infty} \bigl( \Omega; \, W_{\mathrm{per}} (Y)^2 \bigr)$. We will see later why it is
	necessary that we must constrain ourselves to $z(\, \boldsymbol{\cdot} \,,y) \in L^{\infty} (\Omega)^2$
	a.e.~$y \in \mathbb{R}^2$. \\

	\emph{Step (ii): Homogenized matrix.} Let us first derive the expression \eqref{eq:homogenizedmatrix} for the
	homogenized matrix. From \eqref{eq:flowhomoex} and \eqref{eq:ansatz} we get
	\begin{equation*}
		\biggl( \sum_{j=1}^2 \, b_{ij} \frac{\partial u}{\partial x_j} \biggr)_{i=1,2}
		= \Biggl( \sum_{j=1}^2 \, \int\limits_Y \sum_{k=1}^2 \, a_{ik}(y) \Bigl( \delta_{kj}
			+ \zeta_k \frac{\partial z_j}{\partial y_k}(\, \boldsymbol{\cdot} \, ,y) \Bigr) \, \mathrm{d} y
				\, \frac{\partial u}{\partial x_j} \Biggr)_{i=1,2},
	\end{equation*}
	which is satisfied if
	\begin{equation*}
		\bigl( b_{ij} \bigr)_{i,j=1,2} = \Biggl( \int\limits_Y \sum_{k=1}^2 \, a_{ik}(y) \Bigl( \delta_{kj}
			+ \zeta_k \frac{\partial z_j}{\partial y_k}(\, \boldsymbol{\cdot} \, ,y) \Bigr)
				\, \mathrm{d} y \Biggr)_{i,j=1,2},
	\end{equation*}
	where we note that $B \in L^{\infty} (\Omega)^{2 \times 2}$, which requires
	$z(\, \boldsymbol{\cdot} \, ,y) \in L^{\infty} (\Omega)^2$ a.e.~$y \in \mathbb{R}^2$. \\

	\emph{Step (iii): Local problem.} Next, let us derive the local problem \eqref{eq:localproblemex}. Fix some
	$v \in W_{\mathrm{per}} (Y)$ to be used in the local problem \eqref{eq:localprob} in
	Proposition~\ref{prop:lambdahomoex}, whose left-hand side becomes, for each $x \in \Omega$,
	\begin{multline*}
		\int\limits_Y \sum_{i=1}^2 \sum_{k=1}^2 \, a_{ik}(y) \! \biggl\lgroup \! \frac{\partial u}{\partial x_k}(x)
			+ \zeta_k(x) \frac{\partial}{\partial y_k} \sum_{j=1}^2 \, \frac{\partial u}{\partial x_j}(x) z_j(x,y)
				\! \biggr\rgroup \! \zeta_i(x) \frac{\partial v}{\partial y_i}(y) \, \mathrm{d} y \\
		= \sum_{j=1}^2 \, \int\limits_Y \sum_{i=1}^2 \sum_{k=1}^2 \, \zeta_i(x) \frac{\partial v}{\partial y_i}(y)
			\, a_{ik}(y) \Bigl( \delta_{kj} + \zeta_k(x) \frac{\partial z_j}{\partial y_k}(x,y) \Bigr) \, \mathrm{d} y
				\, \frac{\partial u}{\partial x_j}(x) .
	\end{multline*}
	This must be zero, which is the case if, for each $x \in \Omega$,
	\begin{equation*}
		\Biggl( \int\limits_Y \sum_{i=1}^2 \sum_{k=1}^2 \, \zeta_i(x) \frac{\partial v}{\partial y_i}(y) \, a_{ik}(y)
			\Bigl( \delta_{kj} + \zeta_k(x) \frac{\partial z_j}{\partial y_k}(x,y) \Bigr)
				\, \mathrm{d} y \Biggr)_{j=1,2} = 0 .
	\end{equation*}
	By partial integrating and using the divergence theorem, we obtain, for each $x \in \Omega$,
	\begin{multline*}
		\Biggl( \; \int\limits_{\partial Y} \sum_{i=1}^2 \sum_{k=1}^2 \, n_i(y) \, \zeta_i(x) v(y) a_{ik}(y)
			\Bigl( \delta_{kj} + \zeta_k(x) \frac{\partial z_j}{\partial y_k}(x,y) \Bigr)
				\, \mathrm{d} S \Biggr)_{j=1,2} \\
		- \Biggl( \int\limits_Y v(y) \sum_{i=1}^2 \sum_{k=1}^2 \, \zeta_i(x) \frac{\partial}{\partial y_i}
			\! \biggl\lgroup \! a_{ik}(y) \Bigl( \delta_{kj} + \zeta_k(x) \frac{\partial z_j}{\partial y_k}(x,y) \Bigr)
				\! \biggr\rgroup \! \, \mathrm{d} y \Biggr)_{j=1,2} = 0,
	\end{multline*} 
	where $n$ is the unit outward normal to $\partial Y$. Since $v$ is $Y$-periodic, the surface integral vanishes, and
	we are left with, for each $x \in \Omega$,
	\begin{equation*}
		- \Biggl( \int\limits_Y v(y) \sum_{i=1}^2 \sum_{k=1}^2 \, \zeta_i(x) \frac{\partial}{\partial y_i}
			\! \biggl\lgroup \! a_{ik}(y) \Bigl( \delta_{kj} + \zeta_k(x) \frac{\partial z_j}{\partial y_k}(x,y) \Bigr)
				\! \biggr\rgroup \! \, \mathrm{d} y \Biggr)_{j=1,2} = 0,
	\end{equation*}
	which certainly is satisfied if, for each $x \in \Omega$,
	\begin{equation*}
		- \Biggl( \sum_{i=1}^2 \sum_{k=1}^2 \, \zeta_i(x) \frac{\partial}{\partial y_i} \! \biggl\lgroup \! a_{ik}
			\Bigl( \delta_{kj} + \zeta_k(x) \frac{\partial z_j}{\partial y_k} \Bigr)
				\! \biggr\rgroup \! \Biggr)_{j=1,2} = 0 ,
	\end{equation*}
	and we have shown \eqref{eq:localproblemex}. \\

	\emph{Step (iv): Uniqueness.} It remains to prove the uniqueness of the solution to the local problem. Fixing
	$x \in \Omega$, we can define a new, rescaled, $y$-variable $y^{\zeta(x)}$ by letting
	\begin{equation*}
		\bigl( y^{\zeta(x)}_i (y) \bigr)_{i=1,2} = \Bigl(  \frac{y_i}{\zeta_i(x)} \Bigr)_{i=1,2}, \qquad y \in Y .
	\end{equation*}
	The new local problem can be written
	\begin{equation*}
		- \Biggl( \sum_{i=1}^2\sum_{k=1}^2 \, \frac{\partial}{\partial y^{\zeta(x)}_i} \! \biggl\lgroup \!
			a^{\zeta(x)}_{ik} \biggl( \delta_{kj} + \frac{\partial z^{\zeta(x)}_j}{\partial y^{\zeta(x)}_k} \biggr)
				\! \biggr\rgroup \! \Biggr)_{j=1,2} = 0 \quad \textrm{in } Y^{\zeta(x)},
	\end{equation*}
	effectively a ``classical'' local problem, where the $2 \times 2$ matrix $A^{\zeta(x)}$ and the $2$-tuple
	$z^{\zeta(x)}$ are given according to
	\begin{equation*}
		\left\{
			\begin{aligned}
				A^{\zeta(x)} \circ y^{\zeta(x)}	&= A \\
				z^{\zeta(x)} \circ y^{\zeta(x)} &= z(x,\, \boldsymbol{\cdot} \,)
			\end{aligned}
		\right. \; ,
	\end{equation*}
	and $Y^{\zeta(x)} = (0,1) \times \bigl( 0,\tfrac{1}{2x_2} \bigr)$. Since
	\begin{equation*}
		A^{\zeta(x)} \in C_{\mathrm{per}} \bigl( Y^{\zeta(x)} \bigr)^{2 \times 2}
			\cap \mathcal{M} \bigl( r,s; \, \mathbb{R}^2 \bigr) ,
	\end{equation*}
	uniqueness is ensured just as in the ``classical'' case.
\end{proof}
\begin{rema}
	In Step (iv), the fact that $Y^{\zeta(x)}$, for each $x \in \Omega$, is a rectangle rather than the unit cube will,
	of course, not spoil our  argumentation.
\end{rema}
\begin{figure}[htp]
	\begin{center}
		\includegraphics[scale=0.7]{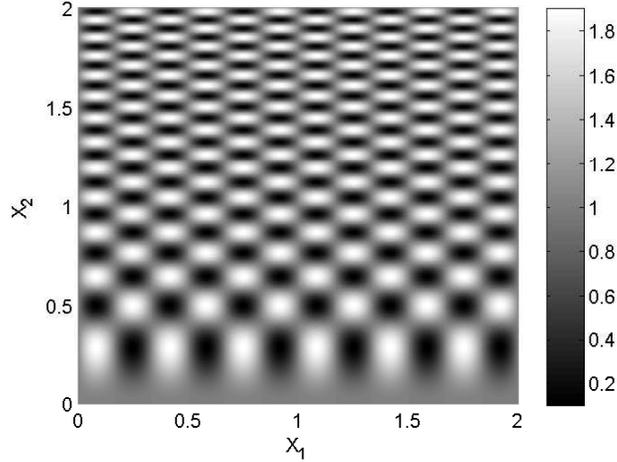}
	\end{center}
	\caption{The scalar factor of $A \circ \alpha^h$, $h = 3$.}
	\label{fig:scalarfactor}
\end{figure}
\begin{figure}[htp]
	\begin{center}
		\includegraphics[scale=0.7]{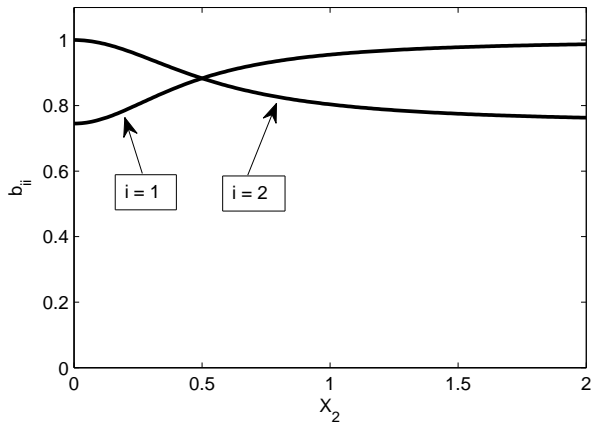}
	\end{center}
	\caption{The diagonal entries of $B$.}
	\label{fig:homomatrixcomp}
\end{figure}

\section{A numerical illustration}
\label{sec:numill}
	
As an illustration of the theoretical results of Section~\ref{sec:nonpertwodimex}, consider the sequence of problems
\eqref{eq:PDEseq} with $A^h = A \circ \alpha^h$ where $A$ is given as a product between a scalar factor and a unit matrix according to
\begin{equation*}
	\bigl( a_{ij}(y) \bigr)_{i,j=1,2} = \bigl( 1 + \tfrac{9}{10} \sin 2 \pi y_1 \, \sin 2 \pi y_2 \bigr)
		( \delta_{ij} )_{i,j=1,2}, \quad y \in \mathbb{R}^2,
\end{equation*}
and let $\Omega = (\delta,2)^2$ where $\delta \gtrsim 0$. Apparently,
\begin{equation*}
	A \in C_{\mathrm{per}} (Y)^{2 \times 2} \cap \mathcal{M} \bigl( \tfrac{19}{10},\tfrac{1}{10}; \, \mathbb{R}^2 \bigr),
\end{equation*}
so Theorem~\ref{theo:lambdahomoex} is applicable. Furthermore,
\begin{equation*}
	\bigl( ( a_{ij} \circ \alpha^h ) (x) \bigr)_{i,j=1,2} = \bigl( 1 + \tfrac{9}{10} \sin 2 \pi hx_1
		\, \sin 2 \pi hx_2^2 \bigr) ( \delta_{ij} )_{i,j=1,2}, \quad x \in (\delta,2)^2,
\end{equation*}
see Figure~\ref{fig:scalarfactor}. By using \eqref{eq:localproblemex}, the local problem for $z$ becomes, for each
$x \in \Omega$,
\begin{equation}
	\label{eq:heateqlocprob}
	- \biggl( \sum_{i=1}^2 \sum_{j=1}^2 \, \frac{\partial}{\partial y_i} \Bigl( c_{ij}(x,\, \boldsymbol{\cdot} \,)
		\frac{\partial z_k}{\partial y_j}(x,\, \boldsymbol{\cdot} \,) \Bigr) \biggr)_{k=1,2}
	= \bigl( g_k(x,\, \boldsymbol{\cdot} \,) \bigr)_{k=1,2} \quad \textrm{in } Y,
\end{equation}
where the matrix $C$ and the $2$-tuple $g$ are given by
\begin{align*}
	&\left\{
		\begin{aligned}
			c_{11}(x,y)	&= \bigl( 1 + \tfrac{9}{10} \sin 2 \pi y_1 \, \sin 2 \pi y_2 \bigr) \\
			c_{22}(x,y)	&= 4x_2^2 \, \bigl( 1 + \tfrac{9}{10} \sin 2 \pi y_1 \, \sin 2 \pi y_2 \bigr) \\
			c_{ij}(x,y)	&= 0, \quad i \neq j
		\end{aligned}
	\right.
	\intertext{and}
	&\left\{
		\begin{aligned}
			g_1(x,y)	&= \tfrac{9}{5} \pi \, \cos 2 \pi y_1 \, \sin 2 \pi y_2 \\
			g_2(x,y)	&= \tfrac{18}{5} \pi x_2 \, \sin 2 \pi y_1 \, \cos 2 \pi y_2
		\end{aligned}
	\right. ,
\end{align*}
respectively. Solving \eqref{eq:heateqlocprob} numerically (effectively we have a one-parameter family, with respect to
$x_2$, of partial differential equations to solve) and then computing the homogenized matrix through
\eqref{eq:homogenizedmatrix}, we get that $B$ is diagonal with non-vanishing entries, functions with respect to $x_2$
only, given according to Figure~\ref{fig:homomatrixcomp}. Note the interesting feature that
$b_{11}|_{x_2 = 1/2} = b_{22}|_{x_2 = 1/2}$, where $B$ obviously is proportional to the unit matrix, i.e., along the
line $x_2 = \tfrac{1}{2}$, the homogenized matrix is isotropic. The heuristic explanation to this is simple; for large
$h$ the mapped periodicity cells in the vicinity of the line $x_2 = \tfrac{1}{2}$ are near-perfect squares. Of course,
it is crucial that $A$ is isotropic to begin with in order for the map $A \circ \alpha^h$ to exhibit a near-isotropy
property on such mapped, near-perfect squares.

\end{document}